\newtheorem{theorem}{Theorem}[section]
\newtheorem{lemma}[theorem]{Lemma}
\newtheorem{corollary}[theorem]{Corollary}
\newtheorem{theoremA}{Theorem}
\theoremstyle{definition}
\newtheorem{remark}[theorem]{Remark}
\newtheorem{definition}[theorem]{Definition}
\theoremstyle{plain}
\newtheorem*{cor}{Corollary}
\newcommand{\cP}{\mathcal P}
\newcommand{\cS}{\mathcal S}
\def\Cz{\mathbb{C}}
\def\Nz{\mathbb{N}}
\def\Qz{\mathbb{Q}}
\def\Zz{\mathbb{Z}}
\newcommand{\mfp}{\mathfrak p}
\newcommand{\mfq}{\mathfrak q}
\newcommand{\mfP}{\mathfrak P}
\newcommand{\ab}{\textrm{ab}}
\newcommand{\Gal}{\mathrm{Gal}}
\newcommand{\Gkab}{G^{\ab}_K}
\newcommand{\Gkabx}{G^{\ab}_{K'}}
\newcommand{\Gnab}{G^{\ab}_N}
\newcommand{\Hkab}{\widecheck{G}_{K}}
\newcommand{\Hlab}{\widecheck{G}_{K}}
\newcommand{\Hlabx}{\widecheck{G}_{K'}}
\newcommand{\Hnab}{\widecheck{G}_{N}}
\newcommand{\Frob}[1]{\textup{Frob}_{#1}}
\newcommand{\IsoL}{\textup{Iso}_{L\textup{-series}}(\Hlab[l], \Hlabx[l])}
\newcommand{\IsoP}{\textup{Iso}_{\mathcal{P}}(\Hlab[l], \Hlabx[l])}
\newcommand{\IsoPx}{\textup{Iso}^\delta_{\mathcal{P}}(\Hlab[l], \Hlabx[l])}
\newcommand{\Isofield}{\textup{Iso}(K, K')}
\newcommand{\DistTo}{\xrightarrow{
   \,\smash{\raisebox{-0.45ex}{\ensuremath{\scriptstyle\sim}}}\,}}
\newcommand\restr[2]{{
  \left.\kern-\nulldelimiterspace 
  #1 
  \vphantom{\big|} 
  \right|_{#2} 
  }}
\newcommand{\new}[1]{\textcolor{black}{#1}}
\begin{document}

\date{\today\ (version 1.0)} 
\title{$L$-series and isomorphisms of number fields}
\author[H.~Smit]{Harry Smit}
\address{(HS) Mathematisch Instituut, Universiteit Utrecht, Postbus 80.010, 3508 TA Utrecht, Nederland}
\email{h.j.smit@uu.nl}
\subjclass[2010]{11R37, 11R42}
\keywords{\normalfont Class field theory, $L$-series, arithmetic equivalence}

\begin{abstract}
Two number fields with equal Dedekind zeta function are not necessarily isomorphic. However, if the number fields have equal sets of Dirichlet $L$-series then they \emph{are} isomorphic. We extend this result by showing that the isomorphisms between the number fields are in bijection with $L$-series preserving isomorphisms between the character groups.
\end{abstract}

\maketitle

\section{Introduction}
Kronecker \cite{kronecker} started a programme to characterise a number field and its extensions using only information about the prime ideals (``primes'' from now on). Ga{\ss}mann (\cite[p.\ 671--672]{gassmann}) showed that number fields with the same Dedekind zeta function (called arithmetically equivalent fields) need not be isomorphic. Even all local information does not suffice: Komatsu \cite{komatsu} gave an example of two non-isomorphic number fields with isomorphic adele rings.

Fortunately, there has also been success: the Neukirch-Uchida Theorem (\cite[Satz 2]{neukirch} and \cite[Ch.\ XII, \S 2]{neukirchcohom})  states that two number fields with isomorphic absolute Galois groups are necessarily isomorphic. Uchida (\cite[Main Thm.]{uchida2}, see also \cite[Ch.\ XII, \S 2, Cor.\ 12.2.2]{neukirchcohom}) proved later that the link between a number field and its absolute Galois group is even stronger: the automorphisms of a number field are in bijection with the outer automorphisms of the absolute Galois group. As a drawback, the structure of the absolute Galois group is, even for $\Qz$, not very well understood. 

A reasonable object to consider next is the abelianized absolute Galois group. Although it is well-understood by class field theory, it lacks the capacity to uniquely determine the underlying field: Kubota (\cite[\S 4]{kubota}) gave a classification of the abelianized absolute Galois groups of number fields, which was used by Onabe \cite{onabe} to show that there exist non-isomorphic imaginary quadratic fields with isomorphic abelianized absolute Galois groups. Stevenhagen and Angelakis (\cite[Thm.\ 4.1]{angelakisstevenhagen}) gave an explicit form of the abelianized absolute Galois group for many imaginary quadratic fields of low class number and found that most of these groups were isomorphic. 

For any number field, there exists a Dirichlet $L$-series of a well-chosen character of odd prime order that does not occur as a Dirichlet $L$-series of any other non-isomorphic number field, see \cite[Thm.\ 10.1]{CdSLMS}, and the same holds for a pair of quadratic characters (\cite[Thm.\ 2.2.2]{Pintonello}). Therefore, if two number fields share all $L$-series for characters of a certain prime order, then they are isomorphic. However, these theorems do not provide explicit isomorphisms, hence the question arises whether or not one can link the automorphism group of a number field to its Dirichlet $L$-series.

The main result of this paper is that when one considers the structure of the Dirichlet character group of the absolute Galois group along with their Dirichlet $L$-series, then one can not only recover the underlying number field, but also its automorphism group.

Let $K$ and $K'$ be number fields, and denote by $\Hlab[l]$ and $\Hlabx[l]$ the $l$-torsion of the character groups of their absolute Galois groups. Moreover, let $\IsoL$ be the set of isomorphisms $\psi: \Hlab[l] \DistTo \Hlabx[l]$ such that $\chi$ and $\psi(\chi)$ have the same $L$-series for any $\chi \in \Hlab[l]$. We prove the following theorem:

\begin{theoremA}\label{theoremA:bijection between IsoL and Isofield}
Let $K$ and $K'$ be number fields, and let $l$ be any prime number. There exists a bijection
\begin{equation*}
\begin{tikzcd}
\Theta: \IsoL \arrow{r} & \Isofield.
\end{tikzcd}
\end{equation*}
\end{theoremA}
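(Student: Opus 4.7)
The plan is to construct $\Theta$ and its inverse $\Theta^{-1}$ separately and check both compositions. The easier direction is $\Theta^{-1} \colon \Isofield \to \IsoL$: given a field isomorphism $\sigma \colon K \to K'$, I choose compatible algebraic closures and obtain an induced isomorphism $\sigma^{\ab} \colon G_K^{\ab} \to G_{K'}^{\ab}$, which is well-defined independently of the choice of extension since any two extensions of $\sigma$ differ by an inner automorphism of the target Galois group. Pontryagin dualising and restricting to $l$-torsion yields an isomorphism $\psi_\sigma \colon \Hlab[l] \to \Hlabx[l]$. This $\psi_\sigma$ preserves $L$-series: $\sigma$ carries a prime $\mfp$ of $K$ to a prime $\sigma(\mfp)$ of $K'$ of equal residue degree and sends $\Frob{\mfp}$ to $\Frob{\sigma(\mfp)}$ in the abelianisations, so each Euler factor on one side equals the corresponding factor on the other after applying $\psi_\sigma$.

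For the main direction, I would extract arithmetic data from a given $\psi \in \IsoL$. Fix a rational prime $p$ away from ramification and from $l$. For each $\chi \in \Hlab[l]$, the equality of $L$-series forces equality of the Euler subproducts at primes above $p$. Each such local factor has the form $1 - \chi(\Frob{\mfp})\, N\mfp^{-s}$ for unramified $\mfp \mid p$, so the unordered multiset of pairs $\bigl(N\mfp,\; \chi(\Frob{\mfp})\bigr)$ coincides with the analogous multiset for $K'$ under $\psi$. Letting $\chi$ range over the finite group $\Hlab[l]$ separates primes with distinct Frobenius behaviour, yielding a norm-preserving bijection $\Phi$ between unramified primes of $K$ and of $K'$ (outside a controllable finite exceptional set) with $\psi(\chi)(\Frob{\Phi(\mfp)}) = \chi(\Frob{\mfp})$ for every $\chi$ and $\mfp$. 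Applying the reconstruction techniques of \cite{CdSLMS} (notably Thm.\ 10.1) together with class field theory promotes $\Phi$ to a field isomorphism $\sigma \colon K \to K'$, and I set $\Theta(\psi) \defeq \sigma$.

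It remains to verify that $\Theta$ and $\Theta^{-1}$ are mutually inverse. The Chebotarev density theorem implies that an element of $\IsoL$ is determined by its action on Frobenius classes; since by construction $\psi$ and $\psi_{\Theta(\psi)}$ agree on Frobenius classes outside an exceptional finite set, they coincide on $\Hlab[l]$. Conversely, the reconstruction of $\sigma$ from $\Phi$ recovers $\sigma$ uniquely from its action on primes, which is already encoded in $\sigma^{\ab}$, giving $\Theta(\psi_\sigma) = \sigma$. I expect the main obstacle to be the passage from $\psi$ to $\Phi$: ramified primes, primes above $l$, and primes at which a given character vanishes must all be handled uniformly, and one must verify that the characters in $\Hlab[l]$ collectively separate primes with equal norm but different Frobenius behaviour. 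A secondary subtlety is checking that $\Theta$ is insensitive to the auxiliary choices (orderings of primes above $p$, representative Frobenius elements), which ultimately reduces to the abelian nature of the Galois groups involved.
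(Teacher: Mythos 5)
Your easy direction ($\Isofield \to \IsoL$) matches the paper's composite $\Theta_2 \circ \Theta_4$, and your extraction of a norm-preserving prime bijection $\Phi$ from a given $\psi \in \IsoL$ is essentially the paper's map $\Theta_3$ (though note that the paper handles ramified primes too, showing $\chi(\mfp) = 0 \Leftrightarrow \psi(\chi)(\phi(\mfp)) = 0$, so no finite exceptional set needs to be excluded at that stage).

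The gap is in the sentence ``Applying the reconstruction techniques of \cite{CdSLMS} (notably Thm.\ 10.1) together with class field theory promotes $\Phi$ to a field isomorphism $\sigma$.'' Theorem 10.1 of \cite{CdSLMS} is a rigidity result: it asserts that if two number fields share a certain $L$-series, they are isomorphic. It produces the \emph{existence} of some isomorphism but does not build a specific $\sigma$ out of the prime bijection $\Phi$, and in particular gives no way to check that the resulting $\Theta$ is well-defined, injective, or inverse to $\Theta^{-1}$. This is not a loose end that can be tightened by routine class field theory; it is precisely the hardest part of the paper. The paper's proof (the map $\Theta_5$, Theorem \ref{theorem:positive_density_result}) passes to a common Galois closure $N$ of $K$ and $K'$, uses a pigeonhole argument over $\Gal(N/\Qz)$ to find a $\sigma$ that ``follows'' $\phi$ on a positive-density set of primes of $N$, introduces an auxiliary map $\alpha \colon \Hlab[l] \to \Hnab[l]$ comparing $\psi_\sigma \circ \iota_K$ with $\iota_{K'} \circ \psi$ and shows via Chebotarev that $\alpha$ has finite image, and then uses characters in $\ker \alpha$ of a carefully chosen form to prove that $\sigma$ restricts to an isomorphism $K \DistTo K'$ and that this restriction reproduces $\phi$ outside a finite set. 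None of this is present in your proposal, so the forward map $\Theta$ is not actually constructed. Your concluding verification that $\Theta$ and $\Theta^{-1}$ are mutually inverse correctly anticipates the right tools (Chebotarev plus the fact that elements of these $\textup{Iso}$-sets are determined by their induced prime bijections), but it cannot be completed until $\Theta$ is genuinely defined.
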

The case $l=2$ was first proven by Gabriele Dalla Torre in his unpublished PhD thesis. The idea of the construction of the map $\Theta$ is similar to the approach taken by Neukirch and Uchida: they construct a bijection of primes that preserves the decomposition groups inside the absolute Galois groups. Given a map $\psi \in \IsoL$, we first derive a bijection of primes $\phi$ that is compatible with $\psi$. 
\new{What follows is an application of the following theorem:}
\begin{theoremA}\label{theoremA:pos_density}
\new{Let $K$ and $K'$ be number fields of the same degree, and let $\cS$ be a subset of the primes of inertia degree $1$ of $K$. Suppose that for some finite extension $L/K$, $\cS$ contains $\emph{\text{Spl}}(L/K)$ except for a Dirichlet density zero set. Furthermore, suppose there exists an isomorphism $\psi: \Hlab[l] \DistTo \Hlabx[l]$ with an injective norm-preserving map $\phi: \cS \to \cP_{K'}$ such that
\[
\chi(\mfp) = \psi(\chi)(\phi(\mfp))
\]
Then $K \simeq K'$ and there is a unique $\sigma_\psi: K \DistTo K'$ such that the bijection of primes induced by $\sigma_\psi$ equals $\phi$ on $\cS$ except for finitely many exceptions. }
\end{theoremA}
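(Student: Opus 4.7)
The plan is to translate the compatibility condition $\chi(\mfp) = \psi(\chi)(\phi(\mfp))$ into a Frobenius-preservation statement on the abelianised Galois groups modulo $l$-th powers, and then use the density hypothesis together with a Bauer-type rigidity to extract a field isomorphism $K \DistTo K'$. By class field theory and Pontryagin duality, $\Hlab[l]$ is dual to $\Gkab/(\Gkab)^l$, so $\psi$ dualises to an isomorphism of these Galois quotients under which $\Frob{\phi(\mfp)}$ corresponds to $\Frob{\mfp}$ modulo $l$-th powers for every $\mfp \in \cS$. Consequently, for every finite abelian extension $M/K$ of exponent dividing $l$, there is a canonically determined partner extension $M'/K'$ of the same degree, and $\phi$ restricts to an injection $\cS \cap \text{Spl}(M/K) \hookrightarrow \text{Spl}(M'/K')$; since $\phi$ is injective and norm-preserving, the Dirichlet densities of source and image agree.

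Next, let $\widetilde{L}/K$ denote the Galois closure of $L/K$, so that $\text{Spl}(L/K) = \text{Spl}(\widetilde{L}/K)$ has density $1/[\widetilde{L}:K]$, and let $M_0 \subseteq \widetilde{L}$ be the maximal abelian subextension over $K$ of exponent $l$. Applying the previous step to $M_0$ yields a partner $M_0'/K'$ of the same degree and an injection $\text{Spl}(L/K) \cap \cS \hookrightarrow \text{Spl}(M_0'/K')$ whose image has density $1/[\widetilde{L}:K]$ in $\cP_{K'}$. Comparing this density against Chebotarev densities of split primes in finite Galois extensions of $K'$ containing $M_0'$, and invoking Bauer's theorem (a Galois extension is pinned down by its split primes up to density zero), identifies the image with $\text{Spl}(\widetilde{L}'/K')$ up to density zero, for a uniquely determined Galois extension $\widetilde{L}'/K'$ of degree $[\widetilde{L}:K]$. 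Since $[K:\Qz] = [K':\Qz]$ and $\widetilde{L}'$ inherits the prescribed abelian substructure via $\psi$, one extracts a field isomorphism $K \DistTo K'$.

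The isomorphism $\sigma_\psi$ is then produced by fixing a primitive element of $K$ and using $\phi$ to single out the correct embedding $K \hookrightarrow K'$ among the finitely many available. Uniqueness is a direct Chebotarev argument: if $\sigma_1, \sigma_2 : K \DistTo K'$ both induce $\phi$ on $\cS$ up to finite exceptions, then $\sigma_2^{-1} \circ \sigma_1$ is an automorphism of $K$ fixing a positive-density subset of primes, hence is the identity. The main obstacle is the Bauer-style extraction in the second paragraph: converting the abelian Frobenius-matching data provided by $\psi$ into a non-abelian match for $\widetilde{L}$ on the $K'$-side. This requires delicate Dirichlet-density bookkeeping together with the non-abelian input supplied by the density hypothesis on $\cS$.
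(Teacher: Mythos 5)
Your first paragraph is fine (the dual of $\psi$ matches Frobenii mod $l$-th powers, so exponent-$l$ abelian extensions of $K$ and $K'$ correspond and $\phi$ carries split primes in $\cS$ to split primes, with densities preserved because $\phi$ is injective and norm-preserving), and your uniqueness argument at the end is essentially the paper's. But the second paragraph, which you yourself flag as ``the main obstacle,'' is a genuine gap rather than a proof. Bauer's theorem tells you that a Galois extension of $K'$ is determined by its set of split primes; it does not tell you that an arbitrary positive-density set of primes of $K'$ \emph{is} the split set of some Galois extension. What you actually have is only that $\phi\big(\text{Spl}(L/K)\cap\cS\big)$ is a density-$1/[\widetilde{L}:K]$ subset of $\text{Spl}(M_0'/K')$, where $M_0'$ is the abelian exponent-$l$ partner of $M_0$. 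No amount of density comparison inside abelian extensions forces this set to be $\text{Spl}(\widetilde{L}'/K')$ for a Galois $\widetilde{L}'/K'$: the data supplied by $\psi$ is purely abelian of exponent $l$, and manufacturing the non-abelian match is precisely the content of the theorem, which you defer to unspecified ``bookkeeping.'' Moreover, even granting such an $\widetilde{L}'$, the final step ``one extracts a field isomorphism $K \DistTo K'$'' does not follow from $[K:\Qz]=[K':\Qz]$ plus a shared (or isomorphic) Galois overfield with matching abelian substructure --- arithmetically equivalent non-isomorphic fields have the same degree and the same Galois closure, so something must pin down how $K'$ sits inside the overfield relative to $K$, and nothing in your sketch does that. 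Similarly, ``fixing a primitive element and using $\phi$ to single out the correct embedding'' is not an argument: identifying which embedding induces $\phi$ up to finitely many exceptions is itself a nontrivial claim.

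For comparison, the paper resolves exactly these two points by working inside a single Galois extension $N/\Qz$ containing both $K'$ and $L$. A pigeonhole over the finite group $\Gal(N/\Qz)$ produces $\sigma$ that ``follows'' $\phi$ on a positive-density set of degree-one primes of $N$; a Chebotarev argument shows the character-ratio map $\alpha:\chi\mapsto \psi_\sigma(\iota_K(\chi))/\iota_{K'}(\psi(\chi))$ has finite image, hence finite-index kernel; Grunwald--Wang then supplies, at a suitable rational prime $p$ totally split in $N$, characters $\chi_\mfp\in X_\mfp\cap\ker(\alpha)$, which force $\sigma(\mfP)\cap K'=\phi(\mfp)$ for \emph{every} $\mfP\mid\mfp$. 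This independence of $\mfP$ is what yields $\sigma^{-1}\Gal(N/K')\sigma\subseteq\Gal(N/K)$ and hence $\sigma|_K:K\DistTo K'$, and a second application of the finite-index/Grunwald--Wang lemma shows $\sigma|_K$ agrees with $\phi$ outside finitely many primes. If you want to salvage your outline, you need a replacement for these steps: some mechanism converting the exponent-$l$ abelian Frobenius-matching into a statement about conjugacy of $\Gal(N/K)$ and $\Gal(N/K')$ inside a common Galois group, which your current draft does not contain.
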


\new{To complete the proof of Theorem~\ref{theoremA:bijection between IsoL and Isofield}, we check that the map that sends $\psi$ to $\sigma_\psi$ is a bijection.}

The proofs of both theorems rely heavily on the Grunwald-Wang theorem (see \cite[Ch.\ X, Thm.\ 5]{artintate}), which allows for the creation of characters with specific values on any finite set of primes. Furthermore, we use the Chebotarev density theorem (\cite[Ch.\ VII, \S 13, p.\ 545]{neukirch2013algebraic}) to bound degrees of extensions.

Lastly, we state a corollary that can be seen as an analogue of theorems about different types of equivalence of number fields (such as arithmetical or Kronecker equivalence, \cite[Ch.\ II \& III]{klingenbook}), see for example the Main Theorem of \cite{perlisstuart} and \cite[Satz 1]{klingen}. Both theorems guarantee that no density zero set of ``exceptional primes'' can exist.

\begin{cor}
Let $K$ and $K'$ be number fields, and let $\cS$ be a set of primes of $K$ of Dirichlet density one. Suppose there is an isomorphism $\psi: \Hlab[l] \DistTo \Hlabx[l]$ with an injective norm-preserving map $\phi: \cS \to \cP_{K'}$ such that
\[
\chi(\mfp) = \psi(\chi)(\phi(\mfp))
\]
for any $\chi \in \Hlab[l]$ and $\mfp \in \cS$. Then $\phi$ can be uniquely extended to a norm-preserving bijection between all primes such that $\chi(\mfp) = \psi(\chi)(\phi(\mfp))$ for any $\chi \in \Hlab[l]$ and any prime $\mfp$ of $K$.
\end{cor}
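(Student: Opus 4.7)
My plan is to derive the corollary from Theorem~\ref{theoremA:pos_density} applied with the auxiliary extension $L = K$. First I would replace $\cS$ by its intersection with the set of primes of $K$ of inertia degree one over $\Qz$; because primes of higher inertia degree contribute only $O(\sqrt{x}/\log x)$ to $\pi_K(x)$, this intersection still has Dirichlet density one. I also need to verify $[K:\Qz] = [K':\Qz]$ as required by Theorem~\ref{theoremA:pos_density}; this can be read off from the abstract isomorphism $\psi$ by comparing $\Fz_l$-ranks of the relevant quotients of $\Gkab$ and $\Gkabx$, or directly from a Dirichlet series comparison of $\zeta_K$ and $\zeta_{K'}$ using the density-one identification supplied by $\phi$.

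With $L = K$ one has $\emph{\text{Spl}}(K/K) = \cP_K$, so the density-one condition on $\cS$ is exactly the required inclusion modulo a density-zero set. Theorem~\ref{theoremA:pos_density} then yields an isomorphism $\sigma_\psi \colon K \DistTo K'$ whose induced norm-preserving bijection $\tilde\phi \colon \cP_K \to \cP_{K'}$ equals $\phi$ on $\cS$ outside a finite exceptional set $E$. The next step is to promote the character compatibility from $\cS$ to all primes: $\sigma_\psi$ induces by pullback an isomorphism $\psi' \colon \Hlab[l] \DistTo \Hlabx[l]$ satisfying $\chi(\mfp) = \psi'(\chi)(\tilde\phi(\mfp))$ for every prime $\mfp$, by functoriality of global class field theory. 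Since $\phi = \tilde\phi$ on $\cS \setminus E$,
\[
\psi(\chi)(\tilde\phi(\mfp)) = \psi(\chi)(\phi(\mfp)) = \chi(\mfp) = \psi'(\chi)(\tilde\phi(\mfp))
\]
on that set. Because $\tilde\phi(\cS \setminus E)$ has Dirichlet density one in $\cP_{K'}$, the characters $\psi(\chi)$ and $\psi'(\chi)$ agree on a density-one set of Frobenius elements, so Chebotarev forces $\psi = \psi'$; hence $\tilde\phi$ satisfies the desired compatibility on all of $\cP_K$.

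The main obstacle is showing that the finite exceptional set $E$ does not obstruct the construction of a bijective extension. Suppose $\mfp_0 \in E$ with $\phi(\mfp_0) = \mfq_1 \ne \mfq_2 = \tilde\phi(\mfp_0)$. Then $\mfp_1 := \tilde\phi^{-1}(\mfq_1)$ cannot lie in $\cS \setminus E$, since otherwise $\phi(\mfp_1) = \tilde\phi(\mfp_1) = \mfq_1 = \phi(\mfp_0)$ would contradict injectivity of $\phi$. Hence $\mfp_1 \in E$ or $\mfp_1 \notin \cS$; iterating and using the finiteness of $E$ together with the norm-preservation and character-compatibility constraints, I would argue that every such preimage ultimately lies in $E$, so that $\phi(E) = \tilde\phi(E)$ as sets. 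The map that equals $\phi$ on $\cS$ and $\tilde\phi$ on $\cP_K \setminus \cS$ is then a norm-preserving bijection $\cP_K \to \cP_{K'}$ compatible with $\psi$. Uniqueness follows because any candidate extension agrees with $\phi$ on the density-one set $\cS$, and a second application of the Theorem~\ref{theoremA:pos_density} argument forces it to induce the same field isomorphism $\sigma_\psi$ and hence the same bijection outside $\cS$.
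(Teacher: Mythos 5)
Your overall route (reduce to Theorem~\ref{theorem:positive_density_result} with $L=K$ after discarding the higher-degree primes, recover $\sigma_\psi$, and use the induced $\psi'=\psi_{\sigma_\psi}$ to globalize the compatibility via Chebotarev) is the same as the paper's, but your final paragraph --- the handling of the finite exceptional set $E$ --- is a genuine gap, and the difficulty it tries to address is not actually there. Once you know $\psi=\psi'$, the bijection $\tilde\phi$ induced by $\sigma_\psi$ satisfies $\chi(\mfp)=\psi(\chi)(\tilde\phi(\mfp))$ for \emph{every} prime $\mfp$ of $K$, while by hypothesis $\chi(\mfp)=\psi(\chi)(\phi(\mfp))$ for every $\mfp\in\cS$. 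If $\phi(\mfp)\neq\tilde\phi(\mfp)$ for some $\mfp\in\cS$, Lemma~\ref{lemma:grunwald-wang in any order} gives a $\chi'\in\Hlabx[l]$ with $\chi'(\phi(\mfp))=1$ and $\chi'(\tilde\phi(\mfp))\neq 1$, and evaluating $\chi=\psi^{-1}(\chi')$ at $\mfp$ yields a contradiction; hence $E=\emptyset$, $\tilde\phi$ itself is the required norm-preserving bijective extension, and uniqueness is exactly Lemma~\ref{lemma:prime-bijection-is-unique}. By contrast, your patched map ($\phi$ on $\cS$, $\tilde\phi$ off $\cS$) is a bijection only if $\phi(E)=\tilde\phi(E)$, and the iteration you sketch (``every such preimage ultimately lies in $E$'') is not carried out and is not evidently true as stated; the uniqueness claim likewise should not need a second pass through Theorem~\ref{theorem:positive_density_result}. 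This separation argument is precisely how the paper concludes: it first shows $\psi\in\IsoP$ (via the maps of the main theorem) and then identifies the resulting global bijection with $\phi$ on all of $\cS$.

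A second, smaller problem is your verification of the degree hypothesis of Theorem~\ref{theorem:positive_density_result}. Comparing ``$\Fz_l$-ranks'' cannot work: $\Hlab[l]$ is an elementary abelian $l$-group of countably infinite rank for every number field, so its abstract isomorphism type carries no information about $[K:\Qz]$. The argument that does work (and is the paper's, in the corollary deducing $\Theta_5$): since $\cS$ has density one, pick a rational prime $p$ unramified in $K$ (totally split, if you have already discarded higher-degree primes) all of whose primes lie in $\cS$; norm-preservation and injectivity of $\phi$ give $[K':\Qz]\geq[K:\Qz]$, and the reverse inequality follows by the same argument applied to $\phi^{-1}$ on the image of $\phi$, which has density one by Lemma~\ref{lemma:density preserved by norm-preserving bijection}. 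With these two repairs your proof goes through and essentially coincides with the paper's.
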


A natural follow-up question that is not considered in this paper is whether or not Theorem~\ref{theoremA:bijection between IsoL and Isofield} can be strengthened to include not just isomorphisms, but homomorphisms between number fields.

\section{Preliminaries}
In this section we set up notation, introduce the main objects of study, and state some convenient lemmas.

We fix an algebraic closure $\overline{\Qz}$ of $\Qz$ throughout the entire paper. We denote number fields by $k$, $K$, $K'$, and $N$, where usually $N/\Qz$ is a Galois extension. We use $\mfp$ for the prime ideals of $K$ (that we will call ``primes''), $\mfq$ for the primes of $K'$, and $\mfP$ for the primes of $N$. 

The set of primes of a number field $K$ is denoted $\cP_{K}$, and the set of primes lying over a rational prime $p$ is denoted $\cP_{K, p}$. Given a prime $\mfp \in \cP_K$, we denote the norm of $\mfp$ by $N\mfp := \#\big(\mathcal{O}_K/\mfp\big) = p^{f_{\mfp}}$, where $f_{\mfp}$ is the inertia degree of $\mfp$. We denote the zeta function of $K$ by $\zeta_K$.

For a field $K$, let $G_K$ be the absolute Galois group, $K^\textup{ab}$ be the composite of all abelian extensions of $K$, and denote by $\Gkab$ its Galois group over $K$. The dual, denoted $\Hlab$, is the group of all Dirichlet characters (i.e. continuous homomorphisms) $G_K \to \Cz^\times$. For any prime number $l$ we denote by $\Hlab[l]$ the subgroup of $\Hlab$ generated by characters of order $l$. Denote by $1_K \in \Hlab$ the trivial character.

We use the following convenient notation: if $P(T)$ is a polynomial, denote by $[T^n]P(T)$ the coefficient of $T^n$ of $P(T)$.

\subsection*{Dirichlet characters} \leavevmode \\
Associated to every Dirichlet character $\chi \in \Hkab$ is a unique finite cyclic extension $K_{\chi}/K$ of degree equal to the order of $\chi$ such that $\chi$ factors through $\Gal(K_{\chi}/K)$. Let $\mfp$ be a prime of $K$ unramified in $K_{\chi}/K$ and let $\Frob{\mfp}$ be the Frobenius element in $\Gal(K_{\chi}/K)$. We set $\chi(\mfp) := \chi(\Frob{\mfp})$. If $\mfp$ is a prime of $K$ that ramifies in $K_{\chi}/K$, we set $\chi(\mfp) = 0$.

Throughout the paper we will be concerned with the existence of characters with certain properties (mainly with prescribed values at specific primes). For our purposes, the question of whether or not such characters exist is answered by the Grunwald-Wang theorem \cite[Ch.\ X, Thm.\ 5]{artintate}. It states the following: let $\mfp_1, \dots, \mfp_n$ be primes of $K$, and let $K_{\mfp_i}$ be the localization of $K$ at $\mfp_i$. For any $1 \leq i \leq n$, let $m_i$ be an integer and let $\chi_i \in \widecheck{G}_{K_{\mfp_i}}[m_i]$. Then, aside from a special case that occurs only when $m := \text{lcm}(m_1, \dots, m_n)$ is divisible by $4$, there exists a character $\chi \in \Hkab[m]$ such that $\chi(\mfp_i) = \chi_i(\mfp_i)$. 

We do not use the Grunwald-Wang theorem in its full generality; the following lemma suffices. 

\begin{lemma}\label{lemma:grunwald-wang in any order}
Let $K$ be a number field and $S = \{\mfp_1, \dots, \mfp_s\}$ a finite set of primes of $K$. Let $l$ be a prime number, and let $\zeta_1, \zeta_2, \dots, \zeta_s$ be $l^\text{th}$ roots of unity. Then there exists a character $\chi \in \Hlab[l]$ such that $\chi(\mfp_i) = \zeta_i$ for all $1 \leq i \leq s$. 
\end{lemma}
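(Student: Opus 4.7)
The plan is to apply the Grunwald-Wang theorem directly, essentially verbatim from the version cited just above in the preliminaries. The first step is to produce suitable local characters: for each $i \in \{1, \dots, s\}$ I exhibit a continuous character $\chi_i$ of $G_{K_{\mfp_i}}$ of order dividing $l$ with $\chi_i(\Frob{\mfp_i}) = \zeta_i$. Taking $\chi_i$ to be the inflation to $G_{K_{\mfp_i}}$ of the character of $\Gal(K_{\mfp_i}^{\textup{unr}}/K_{\mfp_i})$ that sends $\Frob{\mfp_i}$ to $\zeta_i$ does the job (the trivial character if $\zeta_i = 1$), since $K_{\mfp_i}^{\textup{unr}}$ contains an unramified cyclic extension of every degree dividing $l$.

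Second, I invoke Grunwald-Wang with $m_1 = \dots = m_s = l$, so $m = \text{lcm}(m_1, \dots, m_s) = l$. Since $l$ is prime, either $l$ is odd or $l = 2$, and in either case $4 \nmid l$; hence the exceptional case of the theorem is avoided. Grunwald-Wang therefore yields a global character $\chi \in \Hlab[l]$ with $\chi(\mfp_i) = \chi_i(\Frob{\mfp_i}) = \zeta_i$ for every $i$. Because $\chi^l = 1$, the order of $\chi$ divides $l$, so $\chi$ is either trivial or of order exactly $l$; in either case $\chi$ lies in the subgroup of $\Hlab$ generated by order-$l$ characters, which is precisely $\Hlab[l]$ in the sense of the preliminaries.

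The main obstacle, such as it is, is purely bookkeeping: the lemma is just a repackaging of the non-exceptional case of Grunwald-Wang specialised to prescribed uniform cyclic order $l$, and the only arithmetic check required is the numerical condition $4 \nmid l$, which is automatic whenever $l$ is prime. The reason the author isolates this statement as a separate lemma is presumably that it captures exactly the strength of Grunwald-Wang needed in later arguments, while hiding the well-known exceptional case behind the primality of $l$.
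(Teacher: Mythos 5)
Your proof is correct and follows essentially the same route as the paper: construct unramified local characters of order dividing $l$ taking the prescribed values on Frobenius (the paper builds the degree-$l$ unramified extension explicitly by adjoining a root of unity, you inflate from $\Gal(K_{\mfp_i}^{\textup{unr}}/K_{\mfp_i})$ — the same thing), then apply Grunwald--Wang with $m = l$, which avoids the special case since $l$ is prime.
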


\begin{proof}
Any of the $K_{\mfp_i}$ has an unramified Galois extension $N$ of degree $l$ obtained by adjoining a $(N\mfp_i^l - 1)^\text{th}$ root of unity. Therefore there exists a local character $\chi_i$ that factorises through $\Gal(N/K_{\mfp_i})$ such that $\chi_i(\mfp_i) = \zeta_i$. The existence of $\chi$, along with its order, is now guaranteed by the Grunwald-Wang theorem.
\end{proof}

\subsection*{The $L$-series of a Dirichlet character}

\begin{definition}
Associated to any Dirichlet character $\chi \in \Hkab$ is an $L$-series $L(\chi, s)$, defined by
\[
L(\chi, s) = \prod_{\mfp \in \cP_K} \big(1 - \chi(\mfp) p^{- f_{\mfp} s}\big)^{-1}.
\]
\end{definition}
We define $L_\mfp(\chi, T) = 1 - \chi(\mfp) T^{f_{\mfp}}$, so that
\[
L(\chi, s) = \prod_{p \in \cP_{\Qz}} \prod_{\mfp \in \cP_{K,p}} L_\mfp(\chi, p^{-s})^{-1}.
\]
The $L$-series can also be written as an infinite series:
\begin{equation} \label{eq:L-series}
L(\chi, s) = \sum_{n = 1}^\infty \frac{a_n}{n^s},
\end{equation}
with $a_n \in \Cz$. Any $L$-series converges for any $s$ with $\text{Re}(s) > 1$, and two $L$-series are equal if and only if all their coefficients are the same (see \cite[Ch.\ 2, \S 2.2, Cor.\ 4]{serrecourse}), i.e. 
\[
L(\chi, s) = L(\chi', s) \Longleftrightarrow [n^{-s}]L(\chi, s) = [n^{-s}]L(\chi', s) \text{ for all $n \in \mathbb{N}$,}
\]
where $[n^{-s}]L(\chi, s)$ denotes $a_n$ in the representation (\ref{eq:L-series}). Lastly, we remark that the $L$-series of the trivial character is the Dedekind zeta function of the number field.

\section{Main theorem}
The main theorem establishes bijections between the following four sets of isomorphisms:
\begin{definition} 
\mbox{ }
\begin{itemize}
\item $\Isofield$ is the set of field isomorphisms $K \DistTo K'$. 
\item $\textup{Iso}_{L\textup{-series}}(\Hlab[l], \Hlabx[l])$ is the set of isomorphisms $\psi: \Hlab[l] \DistTo \Hlabx[l]$ for which 
\[
L(\chi, s) = L(\psi(\chi), s)
\]
holds for all characters $\chi \in \Hlab[l]$. 
\item $\IsoP$ is the set of isomorphisms $\psi: \Hlab[l] \DistTo \Hlabx[l]$ for which there is a norm-preserving bijection $\phi: \mathcal{P}_K \to \mathcal{P}_{K'}$ such that 
\[
\chi(\mfp) = \psi(\chi)(\phi(\mfp))
\]
for all $\chi \in \Hlab[l]$ and $\mfp \in \cP_K$. 
\item $\IsoPx$ is the set of isomorphisms $\psi: \Hlab[l] \DistTo \Hlabx[l]$ such that there is a set $\cS \subseteq \cP_K$ of primes of Dirichlet density one and an injective norm-preserving map $\phi: \cS \to \cP_{K'}$ such that for any $\mfp \in \cS$ and any $\chi \in \Hlab[l]$ the equality
\[
\chi(\mfp) = \psi(\chi)(\phi(\mfp))
\]
holds.
\end{itemize}
\end{definition}

\begin{theorem}\label{theorem:main theorem}
\new{Let $l$ be any prime number. There exist injective maps}
\begin{equation*}
\begin{tikzcd}
\IsoL \arrow[r, shift left, "\Theta_3"]  &  \arrow{d}{\Theta_1} \arrow[l, shift left, "\Theta_2"] \IsoP & \arrow[l, swap, "\Theta_4"] \Isofield \\
 & \arrow[ur, swap, "\Theta_5"] \IsoPx & 
\end{tikzcd}
\end{equation*}
such that $\Theta_2$ and $\Theta_3$ are mutual inverses, and $\Theta_1 \circ \Theta_4$ and $\Theta_5$ are mutual inverses. As a result, the sets of isomorphisms are all in bijection.
\end{theorem}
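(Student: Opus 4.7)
The plan is to construct all five arrows directly and then verify the composition identities $\Theta_2 \circ \Theta_3 = \id$, $\Theta_3 \circ \Theta_2 = \id$, $\Theta_5 \circ \Theta_1 \circ \Theta_4 = \id$, and $\Theta_1 \circ \Theta_4 \circ \Theta_5 = \id$; injectivity of every arrow then follows automatically. Three of the arrows are essentially formal. The map $\Theta_1$ is the inclusion that takes $\cS = \cP_K$. For $\Theta_2$, given $\psi \in \IsoP$ with attached norm-preserving bijection $\phi$, I reindex the Euler product $L(\chi,s) = \prod_\mfp \bigl(1 - \chi(\mfp) N\mfp^{-s}\bigr)^{-1}$ along $\phi$, using that each local factor depends only on $\chi(\mfp)$ and $N\mfp$ and that both are preserved. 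For $\Theta_4$, a field isomorphism $\sigma: K \DistTo K'$ induces an isomorphism of absolute Galois groups that restricts both to the $l$-torsion of the Dirichlet character groups and to the prime sets, in a way compatible with the evaluation pairing.

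The main technical work is in $\Theta_3: \IsoL \to \IsoP$. Given $\psi$ preserving $L$-series, I recover the bijection $\phi$ by comparing the Dirichlet coefficients $[p^{-fs}]L(\chi,s) = [p^{-fs}]L(\psi(\chi),s)$ for characters $\chi$ supplied by Lemma~\ref{lemma:grunwald-wang in any order}. For each rational prime $p$, this coefficient decomposes into a contribution from primes of norm exactly $p^f$ (essentially $\sum_{N\mfp = p^f} \chi(\mfp)$, together with corrections coming from smaller-norm split primes via the $p^f T^{2f}$ piece of the local Euler factor) plus contributions built purely from Euler factors at primes of norm $p^{f'}$ with $f' < f$. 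Inducting on $f$, Lemma~\ref{lemma:grunwald-wang in any order} lets me produce $\chi \in \Hlab[l]$ taking any prescribed tuple of $l$-th roots of unity on the primes of norm $p^f$ while killing any specified finite set of smaller-norm primes; equality of the two coefficients then forces equality of the resulting multisets of values on the $K'$-side. Varying the prescription determines a unique norm-preserving bijection $\phi$ above $p$ satisfying $\chi(\mfp) = \psi(\chi)(\phi(\mfp))$ for every $\chi \in \Hlab[l]$; matching of stratum cardinalities follows from $\zeta_K = \zeta_{K'}$ (the case $\chi = 1_K$, since any group isomorphism sends identity to identity). The relation $\Theta_2 \circ \Theta_3 = \id$ is built into the construction, and $\Theta_3 \circ \Theta_2 = \id$ is a uniqueness statement that reduces again to Lemma~\ref{lemma:grunwald-wang in any order}: no norm-preserving bijection other than $\phi$ is compatible with all characters of order $l$.

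Finally, $\Theta_5: \IsoPx \to \Isofield$ is an application of Theorem~\ref{theoremA:pos_density}: given $(\psi,\phi,\cS)$, I intersect $\cS$ with the density-one set of primes of $K$ of inertia degree one over $\Qz$, establish $[K:\Qz] = [K':\Qz]$ (for instance by splitting $\zeta_K = \prod_{\mfp \in \cS}(1 - N\mfp^{-s})^{-1} \cdot \prod_{\mfp \notin \cS}(1 - N\mfp^{-s})^{-1}$ and matching the first factor with the analogous one for $\zeta_{K'}$ via $\phi$, so that degrees are read off from the simple pole at $s = 1$), and apply Theorem~\ref{theoremA:pos_density} with $L$ a suitable finite extension (e.g.\ the Galois closure of $K/\Qz$) so that $\text{Spl}(L/K)$ is contained in the restricted $\cS$ up to a density-zero set. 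The remaining identities reduce to uniqueness. For $\sigma \in \Isofield$, both $\sigma$ and $\Theta_5 \circ \Theta_1 \circ \Theta_4(\sigma)$ are field isomorphisms inducing the same prime bijection, so they coincide by the uniqueness clause of Theorem~\ref{theoremA:pos_density}. Conversely, for $\psi \in \IsoPx$ with image $\sigma_\psi$, the characters $\Theta_4(\sigma_\psi)(\chi)$ and $\psi(\chi)$ take equal values on the density-one image $\phi(\cS)$ except at finitely many primes, and two characters agreeing on a density-one set of primes coincide by Chebotarev. The principal obstacle will be the inductive untangling in $\Theta_3$, where one must simultaneously handle contributions from primes of varying norm, the ramified primes (on which $\chi$ vanishes), and the quadratic correction terms of the Euler factors, while ensuring consistency across the many character value assignments permitted by Lemma~\ref{lemma:grunwald-wang in any order}.
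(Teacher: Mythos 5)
Your overall architecture matches the paper's (three formal arrows, the hard work concentrated in $\Theta_3$ and $\Theta_5$, and uniqueness statements proved via Lemma~\ref{lemma:grunwald-wang in any order} and Chebotarev), and invoking Theorem~\ref{theoremA:pos_density} for $\Theta_5$ is legitimate since the paper states it as a separate result. But the construction of $\Theta_3: \IsoL \to \IsoP$, which is the real content here, has a genuine gap. First, a sign of confusion: Dirichlet Euler factors are $L_\mfp(\chi,T)=1-\chi(\mfp)T^{f_\mfp}$; there is no ``$p^fT^{2f}$ piece'' or ``quadratic correction term'' (that shape belongs to elliptic-curve $L$-factors). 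More seriously, your key step --- ``equality of the two coefficients then forces equality of the resulting multisets of values on the $K'$-side; varying the prescription determines a unique norm-preserving bijection $\phi$'' --- is exactly what needs proof and is not supplied. Matching sums (or even multisets) of character values over the primes above $p$, character by character, does not by itself produce a single bijection $\phi$ satisfying $\chi(\mfp)=\psi(\chi)(\phi(\mfp))$ \emph{simultaneously} for all $\chi \in \Hlab[l]$, nor does it control the ramified case $\chi(\mfp)=0$ on the $K'$-side; your induction on $f$ also quietly assumes that the values of $\psi(\chi)$ at the smaller-norm primes of $K'$ are already pinned down, which is part of what is being proved. The paper resolves precisely these points by working with the polynomial $L_p(\chi,T)=\prod_{\mfp\mid p}(1-\chi(\mfp)T^{f_\mfp})$ and its order of vanishing at $T=1$: characters $\chi_i$ isolating a single prime (Lemma~\ref{lemma:grunwald-wang in any order}) give Lemma~\ref{lemma:psichi only one prime}, which defines $\phi$; comparing the coefficient of $T^{f_i}$ gives norm preservation and the value $\zeta_l$ (Lemma~\ref{lemma:phi norm preserving}); and compatibility for arbitrary $\chi$, injectivity of $\phi$, and the ramified case are then handled by multiplying by powers of $\chi_i$ and comparing orders of vanishing again. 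You acknowledge this as ``the principal obstacle,'' but without it the map $\Theta_3$ is not constructed.

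A smaller but real error: your parenthetical argument that $[K:\Qz]=[K':\Qz]$ ``read off from the simple pole at $s=1$'' cannot work, since every Dedekind zeta function has a simple pole at $s=1$ regardless of degree, and matching partial Euler products over $\cS$ and $\phi(\cS)$ only reproduces that pole. The paper's argument is different and elementary: since $\cS$ has density one, there is a rational prime $p$, unramified in $K$, all of whose primes lie in $\cS$; the degree is the sum of the inertia degrees above $p$, and the injective norm-preserving $\phi$ gives $[K':\Qz]\geq[K:\Qz]$, with equality by symmetry (using Lemma~\ref{lemma:density preserved by norm-preserving bijection} for the image). Your closing uniqueness arguments for the two composites are fine in outline and correspond to Corollary~\ref{cor:psi1-psi2} and Lemma~\ref{lemma:sigma is determined by its bijection of primes}.
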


The remainder of the article is structured as follows. Section~\ref{section:structure} can be seen as a ``nuts and bolts'' section on isomorphisms between character groups. Maps $\Theta_1$, $\Theta_2$, and $\Theta_3$ are all the identity map, and their properties are proven in Section~\ref{section:maps 1 to 3}, while Section~\ref{sec:maps 4 and 5} deals with maps $\Theta_4$ and $\Theta_5$. Map $\Theta_4$ does not require difficult techniques as any isomorphism of fields $K \DistTo K'$ induces an isomorphism $\Hlab \DistTo \Hlabx$ along with a bijection of primes. Map $\Theta_5$ requires significantly more work: the main idea is to first find an automorphism of the Galois closure of $K$ with properties concerning the bijection of primes attached to any element of $\IsoP$. We then show that this automorphism restricts to an isomorphism $K \DistTo K'$. 

\section{Isomorphisms between character groups}\label{section:structure}
The aim of this section is to show that any isomorphism $\psi \in \IsoP$ is characterised by its associated bijection of primes.

\begin{lemma}\label{lemma:prime-bijection-is-unique}
For any $\psi \in \IsoP$ there is a unique bijection of primes $\phi: \cP_K \to \cP_{K'}$ such that 
\[
\psi(\chi)(\phi(\mfp)) = \chi(\mfp). 
\]
\end{lemma}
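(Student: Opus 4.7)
Existence of a bijection $\phi$ with the required intertwining property is built into the definition of $\IsoP$, so the substance of the lemma is uniqueness. The plan is to show that any two such bijections must agree at every prime $\mfp$ by a character-separation argument driven by Lemma~\ref{lemma:grunwald-wang in any order}.

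First I would unfold the statement. Suppose $\phi_1,\phi_2:\cP_K\to\cP_{K'}$ are norm-preserving bijections with $\psi(\chi)(\phi_i(\mfp))=\chi(\mfp)$ for all $\chi\in\Hlab[l]$ and all $\mfp$. Fix a prime $\mfp\in\cP_K$ and set $\mfq_1:=\phi_1(\mfp)$, $\mfq_2:=\phi_2(\mfp)$. Subtracting the two compatibility identities gives $\psi(\chi)(\mfq_1)=\psi(\chi)(\mfq_2)$ for every $\chi\in\Hlab[l]$. Because $\psi$ is an isomorphism onto $\Hlabx[l]$, this is equivalent to
\[
\chi'(\mfq_1)=\chi'(\mfq_2)\quad\text{for all }\chi'\in\Hlabx[l].
\]

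Next I would separate $\mfq_1$ and $\mfq_2$ by a Dirichlet character of order $l$. Assume, for contradiction, that $\mfq_1\neq\mfq_2$. Apply Lemma~\ref{lemma:grunwald-wang in any order} to the finite set $\{\mfq_1,\mfq_2\}\subseteq\cP_{K'}$ with prescribed values $\zeta_1=1$ and $\zeta_2$ a nontrivial $l$-th root of unity (possible for every prime $l\geq 2$). This yields $\chi'\in\Hlabx[l]$ with $\chi'(\mfq_1)=1\neq\chi'(\mfq_2)$, contradicting the displayed equality above. Hence $\mfq_1=\mfq_2$, i.e.\ $\phi_1(\mfp)=\phi_2(\mfp)$ for every $\mfp$, so $\phi_1=\phi_2$.

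There is essentially no obstacle here beyond having a sufficiently strong existence statement for characters with prescribed local values; that is exactly what Lemma~\ref{lemma:grunwald-wang in any order} supplies, and the ``bad'' Grunwald–Wang case (multiplicity divisible by $4$) never arises since the order $l$ is prime. The norm-preserving hypothesis on the $\phi_i$ plays no role in the uniqueness argument, but it also costs nothing: the identity $\chi(\mfp)=\psi(\chi)(\phi_i(\mfp))$ applied to the trivial character already forces the ramification loci of $\phi_i(\mfp)$ and $\mfp$ to be aligned, and the character-separation above handles the remaining ambiguity.
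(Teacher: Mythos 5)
Your argument is correct and is essentially identical to the paper's proof: both reduce uniqueness to separating two hypothetical images $\phi_1(\mfp)\neq\phi_2(\mfp)$ by a character of order $l$ supplied by Lemma~\ref{lemma:grunwald-wang in any order}, then pull back along $\psi$ to get a contradiction. (Your closing aside that the trivial character ``aligns ramification loci'' is inaccurate---it takes the value $1$ everywhere and detects nothing---but it plays no role in the proof.)
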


\begin{proof}
Indeed, let $\phi_1$ and $\phi_2$ be bijections $\cP_K \to \cP_{K'}$ such that
\[
\psi(\chi)(\phi_1(\mfp)) = \chi(\mfp) = \psi(\chi)(\phi_2(\mfp))
\]
for any $\mfp \in \cP_K$ and $\chi \in \Hlab[l]$, and suppose for a certain prime $\tilde{\mfp} \in \cP_{K}$ that $\phi_1(\tilde{\mfp}) \neq \phi_2(\tilde{\mfp})$. By Lemma~\ref{lemma:grunwald-wang in any order} there exists a $\chi' \in \Hlabx[l]$ such that $\chi'(\phi_1(\tilde{\mfp})) = 1$ and $\chi'(\phi_2(\tilde{\mfp})) \neq 1$. Then
\[
\chi'(\phi_1(\mfp)) = \psi^{-1}(\chi')(\mfp) = \chi'(\phi_2(\mfp))
\]
is an immediate contradiction.
\end{proof}

\begin{lemma}\label{lem:psi-id}
Let $\psi \in \textup{Aut}(\Hlab[l])$ be an automorphism such that $\psi(\chi)(\mfp) = \chi(\mfp)$ for all $\chi \in \Hlab[l]$ and all $\mfp$ in a density one subset of the primes of $K$. Then $\psi$ is the identity. 
\end{lemma}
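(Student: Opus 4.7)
\emph{Strategy.} I want to show $\psi(\chi) = \chi$ for every $\chi \in \Hlab[l]$. Fix $\chi$ and form the auxiliary character
\[
\chi' \;:=\; \psi(\chi) \cdot \chi^{-1} \in \Hlab[l].
\]
Since $l$ is prime and $\Hlab[l]$ is $l$-torsion (being generated by characters of order $l$), the character $\chi'$ has order $1$ or $l$. It suffices to rule out the second possibility; then $\chi' = 1_K$, i.e.\ $\psi(\chi) = \chi$, and since $\chi$ was arbitrary, $\psi$ is the identity.

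\emph{Passing from the hypothesis to the vanishing of $\chi'$.} Let $\cS \subseteq \cP_K$ be the density-one set on which $\psi(\chi)(\mfp) = \chi(\mfp)$. Only finitely many primes of $K$ ramify in either $K_\chi$ or $K_{\psi(\chi)}$, hence the set
\[
\cS_0 \;:=\; \{\mfp \in \cS : \mfp \text{ is unramified in } K_{\chi'}\}
\]
is obtained from $\cS$ by removing finitely many primes, so $\cS_0$ still has Dirichlet density one. For $\mfp \in \cS_0$ the values $\chi(\mfp)$ and $\psi(\chi)(\mfp)$ are nonzero, so dividing one by the other gives
\[
\chi'(\mfp) \;=\; \psi(\chi)(\mfp) \cdot \chi(\mfp)^{-1} \;=\; 1
\qquad \text{for every } \mfp \in \cS_0.
\]

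\emph{Concluding with Chebotarev.} Suppose for a contradiction that $\chi'$ has order $l$. Then $\chi'$ factors through the cyclic Galois group $\Gal(K_{\chi'}/K)$ of order $l$ as a faithful character, so its kernel inside $\Gal(K_{\chi'}/K)$ is trivial. In particular, an unramified prime $\mfp$ satisfies $\chi'(\mfp) = 1$ if and only if $\Frob{\mfp} = 1$, i.e.\ $\mfp$ splits completely in $K_{\chi'}/K$. By the Chebotarev density theorem, the set of such primes has Dirichlet density $1/l < 1$, contradicting the fact that $\cS_0$ already has density one and is contained in this set. Hence $\chi' = 1_K$, completing the argument.

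There is no real obstacle in this proof: the hypothesis and Chebotarev essentially do the work once one realises that multiplying by $\chi^{-1}$ turns the problem into the statement ``a character in $\Hlab[l]$ which is trivial on a density-one set of primes must be trivial''. The only minor point to be careful about is handling the finitely many ramified primes where the character values vanish, which is why I pass from $\cS$ to $\cS_0$ before dividing.
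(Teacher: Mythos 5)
Your proof is correct and follows essentially the same route as the paper: form the quotient character $\psi(\chi)/\chi$, observe it is trivial on a density-one set of primes, and invoke Chebotarev to force the associated cyclic extension to have degree $1$. Your version is in fact slightly more careful than the paper's, since you explicitly discard the finitely many ramified primes (where the character values vanish) before dividing and note that the order of the quotient character is $1$ or $l$.
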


\begin{proof}
Consider the character $\tilde{\chi} = \psi(\chi)/\chi$. By assumption, $\tilde{\chi}(\mfp) = 1$ for a density one set of primes, hence a density one set of primes splits completely in the extension $K_{\tilde{\chi}}/K$. By the Chebotarev density theorem $[K_{\tilde{\chi}}:K] = 1$, implying $\psi(\chi) = \chi$.
\end{proof}

\begin{corollary}\label{cor:psi1-psi2}
\new{Let $\psi_1, \psi_2 \in \IsoPx$. Suppose their corresponding maps of primes $\phi_1$ and $\phi_2$ agree on a subset of the primes of density $1$. Then $\psi_1 = \psi_2$. }
\end{corollary}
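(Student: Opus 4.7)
\emph{Proof proposal.} The plan is to reduce the statement directly to Lemma~\ref{lem:psi-id}. Since both $\psi_1$ and $\psi_2$ are isomorphisms $\Hlab[l]\DistTo\Hlabx[l]$, the composite $\psi:=\psi_2^{-1}\circ\psi_1$ lies in $\mathrm{Aut}(\Hlab[l])$, so it is enough to exhibit a Dirichlet density one subset of $\cP_K$ on which $\psi(\chi)(\mfp)=\chi(\mfp)$ for every $\chi\in\Hlab[l]$.

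To produce such a set I would take the intersection of three density-one sets. Let $\cS_i\subseteq\cP_K$ be the density-one subset attached to $\psi_i$ by the definition of $\IsoPx$, on which the relation $\chi(\mfp)=\psi_i(\chi)(\phi_i(\mfp))$ holds for every $\chi\in\Hlab[l]$, and let $\cS_0\subseteq\cP_K$ be the density-one subset given by hypothesis on which $\phi_1$ and $\phi_2$ coincide. Then $\cS:=\cS_0\cap\cS_1\cap\cS_2$ still has Dirichlet density one. For any $\chi\in\Hlab[l]$ and any $\mfp\in\cS$, I would chain the defining relations: the $\psi_2$-relation (applied at $\mfp\in\cS_2$ in the form $\psi_2^{-1}(\chi')(\mfp)=\chi'(\phi_2(\mfp))$) rewrites
\[
\psi(\chi)(\mfp)=\psi_2^{-1}\bigl(\psi_1(\chi)\bigr)(\mfp)=\psi_1(\chi)\bigl(\phi_2(\mfp)\bigr);
\]
the identity $\phi_2(\mfp)=\phi_1(\mfp)$ from $\mfp\in\cS_0$ turns this into $\psi_1(\chi)(\phi_1(\mfp))$; and finally the $\psi_1$-relation at $\mfp\in\cS_1$ collapses the result to $\chi(\mfp)$.

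With $\psi(\chi)$ and $\chi$ agreeing on the density-one set $\cS$ for every $\chi$, Lemma~\ref{lem:psi-id} yields $\psi=\mathrm{id}$, hence $\psi_1=\psi_2$. I do not anticipate a substantive obstacle: the argument is essentially bookkeeping of which density-one set each of the three relations is valid on, and all of the Chebotarev content is already packaged inside Lemma~\ref{lem:psi-id}. The only mild subtlety is the choice to pass to the $K$-side rather than the $K'$-side, which avoids having to argue that the image $\phi_1(\cS)$ has density one in $\cP_{K'}$.
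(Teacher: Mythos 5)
Your proposal is correct and is essentially the paper's own argument: the paper likewise disposes of the corollary by applying Lemma~\ref{lem:psi-id} to the composite automorphism (written there as $\psi_1\psi_2^{-1}$), and your write-up merely spells out the density-one bookkeeping that the paper leaves implicit. Your choice of $\psi_2^{-1}\circ\psi_1$ on the $K$-side is a harmless variant that matches the lemma as literally stated.
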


\begin{proof}
Apply the previous lemma to $\psi_1 \psi_2^{-1}$.
\end{proof}

\section{Maps $\Theta_1$, $\Theta_2$, and $\Theta_3$}\label{section:maps 1 to 3}
We prove that the three maps $\Theta_1$, $\Theta_2$, and $\Theta_3$ of Theorem~\ref{theorem:main theorem} can all be chosen as $\psi \mapsto \psi$. This establishes a bijection between the three sets. Each of the following subsections deals with one of the maps.

\subsection*{The map $\Theta_1$} \leavevmode \\
This is a triviality: the conditions imposed on elements of $\IsoP$ are stronger than those imposed on elements of $\IsoPx$. Hence the map $\Theta_1: \psi \mapsto \psi$ is an injective map 
$\IsoP \to \IsoPx$.

\subsection*{The map $\Theta_2$} \leavevmode \\
\new{This can be found in \cite[Section 5]{CdSLMS}. For the sake of self-containedness, we include the argument here as well. Let $\psi \in \IsoP$ with associated norm-preserving bijection of primes $\phi$. Note that for any $\chi \in \Hkab[l]$ and $\mfp \in \cP_K$ we have $\chi(\mfp) = \psi(\chi)(\phi(\mfp))$. Hence }
\[
L_{\mfp}(\chi, T) = 1 - \chi(\mfp) T^{f_{\mfp}} = 1 - \psi(\chi)(\phi(\mfp)) T^{f_{\phi(\mfp)}} = L_{\phi(\mfp)}(\psi(\chi), T).
\]
\new{It follows that $L(\chi, s) = L(\psi(\chi), s)$, hence $\psi \in \IsoL$. Therefore the map $\Theta_2: \psi \mapsto \psi$ is an injective map $\IsoP \to \IsoL$.}

\subsection*{The map $\Theta_3$} \leavevmode \\
We show that to any element $\psi \in \IsoL$ we can attach a norm-preserving bijection of primes $\phi: \cP_{K} \to \cP_{K'}$, from which it follows that the map $\Theta_3$ can be defined as $\psi \mapsto \psi$. For the remainder of this section, suppose there exists an isomorphism $\psi: \Hlab[l] \DistTo \Hlabx[l]$ such that $L(\chi, s) = L(\psi(\chi), s)$ for all $\chi \in \Hlab[l]$. The following definition and lemma allow us, in order to construct $\phi$, to focus only on primes that lie over the same rational prime.

\begin{definition}\label{definition:local factor at p}
Let $\chi \in \Hlab$ and let $p$ a prime. The \emph{local factor at $p$} is
\[
L_p(\chi, T) := \prod_{\mfp \in \cP_{K, p}} L_{\mfp}(\chi, T).
\] 
\end{definition}

\begin{lemma}\label{lemma:Lp equal}
Let $\chi \in \Hlab, \chi'\in \Hlabx$. The equality $L(\chi, s) = L(\chi', s)$ holds if and only if 
\[
L_p(\chi, T) = L_p(\chi', T)
\]
for all rational primes $p$.
\end{lemma}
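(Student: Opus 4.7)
The ``if'' direction is immediate from the Euler product factorisation: if $L_p(\chi, T) = L_p(\chi', T)$ for every rational prime $p$, then
\[
L(\chi, s) = \prod_p L_p(\chi, p^{-s})^{-1} = \prod_p L_p(\chi', p^{-s})^{-1} = L(\chi', s).
\]

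For the ``only if'' direction, the plan is to isolate the contribution of a single rational prime $p$ to the Dirichlet series coefficients, and then recover the local factor from those coefficients. Fix a rational prime $p$. Writing $L(\chi, s) = \sum_{n \geq 1} a_n n^{-s}$ as in (\ref{eq:L-series}), the multiplicativity of $n \mapsto a_n$ coming from the Euler product shows that for any $k \geq 0$ the coefficient $a_{p^k}$ depends only on the local factor at $p$; concretely, the equality
\[
L_p(\chi, T)^{-1} = \sum_{k \geq 0} a_{p^k}\, T^k
\]
holds as an identity of formal power series in $T$ (expanding each factor $(1-\chi(\mfp)T^{f_\mfp})^{-1}$ as a geometric series and multiplying them together produces a well-defined formal power series, whose coefficient of $T^k$ equals $a_{p^k}$ since only rational integers of the form $p^k$ have all their prime factors equal to~$p$).

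Now assume $L(\chi, s) = L(\chi', s)$. By the uniqueness of Dirichlet series coefficients quoted in the paragraph following (\ref{eq:L-series}), we obtain $a_{p^k} = a'_{p^k}$ for every $k$, where $a'_n$ denotes the $n$-th coefficient of $L(\chi', s)$. By the identity above this yields
\[
L_p(\chi, T)^{-1} = L_p(\chi', T)^{-1}
\]
as formal power series, hence $L_p(\chi, T) = L_p(\chi', T)$ (both sides are polynomials obtained by inverting the same invertible formal power series).

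There is no real obstacle here; the only point requiring care is the bookkeeping that the coefficient $a_{p^k}$ genuinely depends only on the local Euler factor at $p$, which follows at once from the Euler product together with the observation that $p^k$ has no prime divisors other than $p$.
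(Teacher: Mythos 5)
Your proof is correct and follows essentially the same route as the paper: both arguments observe that the coefficient of $p^{-ks}$ in the Dirichlet series depends only on the inverse of the local factor at $p$ (the paper writes this as $[p^{-js}]L(\chi,s) = [p^{-js}]L_p(\chi,p^{-s})^{-1}$), and then recover the polynomial $L_p$ from these coefficients.
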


\begin{proof}
The ``if'' part is clear as $L(\chi, s) = \prod\limits_{p \text{ prime}} L_p(\chi, p^{-s})^{-1}$.

\noindent As $L_p(\chi, p^{-s})$ is a polynomial in $p^{-s}$ and all $L_p(\chi, p^{-s})$ have constant coefficient equal to $1$, we have
\[
[p^{-js}] L(\chi, s) = [p^{-js}] L_p(\chi, p^{-s})^{-1}
\]
for any rational prime $p$ and any $j \in \Nz$.
\end{proof}

As $\psi(1_{K}) = 1_{K'}$ and $\zeta_K(s) = L(1_K, s)$, we find $\zeta_K = L(1_K, s) = L(1_{K'}, s) = \zeta_{K'}$. Hence $K$ and $K'$ have the same number of primes lying over every rational prime. Let $p$ be a rational prime and let $\mfp_1, \dots, \mfp_n$ be the primes of $K$ lying over $p$, and $\mfq_1, \dots, \mfq_n$ the primes of $K'$ lying over $p$.

Lemma~\ref{lemma:Lp equal} asserts that $L_p(\chi, T) = L_p(\psi(\chi), T)$ for any $\chi \in \Hkab[l]$,
which reads
\[
\prod_{i = 1}^n (1 - \chi(\mfp_i)T^{f_i}) = \prod_{j = 1}^n (1 - \psi(\chi)(\mfq_j)T^{f'_j}),
\]
where $f_i$ is the inertia degree of $\mfp_i$ and $f'_j$ is the inertia degree of $\mfq_j$.

Note that the order of zero at $T = 1$ on the left hand side is equal to the number of primes $\mfp$ over $p$ at which $\chi(\mfp) = 1$, while on the right hand side it equals the number of primes $\mfq$ lying over $p$ for which $\psi(\chi)(\mfq) = 1$. This equality proves the following lemma.

\begin{lemma}\label{lemma:psichi only one prime}
Let $\chi_i \in \Hkab[l]$ be any character that has value $\zeta_l = \exp(2\pi i /l)$ on $\mfp_i$ and value $1$ on all other primes lying over $p$. The character $\psi(\chi_i)$ has value $1$ on all primes of $K'$ lying over $p$ except at a single prime.  \qed
\end{lemma}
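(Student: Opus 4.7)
The plan is to read off the statement from the identity of local factors $L_p(\chi_i, T) = L_p(\psi(\chi_i), T)$ supplied by Lemma~\ref{lemma:Lp equal}, by comparing the orders of vanishing of the two sides at $T = 1$. Writing both sides as products of linear factors in $T^{f_j}$, I would use the elementary fact that a factor $1 - \alpha T^f$ vanishes at $T = 1$ (with a simple zero) precisely when $\alpha = 1$, and contributes nothing when $\alpha = 0$ (the ramified case, where the factor degenerates to the constant polynomial $1$).

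First I would unpack the left-hand side. Since $\chi_i(\mfp_i) = \zeta_l$ is a primitive $l$-th root of unity and $l \geq 2$, the factor $1 - \chi_i(\mfp_i) T^{f_i}$ does not vanish at $T = 1$. For each $j \neq i$ the hypothesis $\chi_i(\mfp_j) = 1$ forces $\mfp_j$ to be unramified in $K_{\chi_i}$, so the factor $1 - T^{f_j}$ contributes a simple zero at $T = 1$. Hence the order of vanishing of the LHS at $T = 1$ is exactly $n - 1$.

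Next I would analyze the right-hand side analogously: the contribution to the order of vanishing at $T = 1$ equals the cardinality of $\{\, j : \psi(\chi_i)(\mfq_j) = 1 \,\}$, with ramified primes contributing nothing. Equating the two counts gives exactly $n - 1$ such indices $j$, and since there are $n$ primes of $K'$ above $p$ in total, precisely one prime remains at which $\psi(\chi_i)$ does not take the value $1$. The argument is essentially bookkeeping; the only subtlety worth flagging is the distinction between the values $0$ (ramified, no contribution) and $1$ (unramified split-behaviour, simple zero), so that ramified primes above $p$ on the $K'$-side do not spuriously inflate the exceptional count.
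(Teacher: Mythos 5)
Your proof is correct and follows essentially the same argument the paper uses (which is folded into the text immediately preceding the lemma): deduce $L_p(\chi_i,T) = L_p(\psi(\chi_i),T)$ from Lemma~\ref{lemma:Lp equal} and equate the orders of vanishing at $T=1$ on both sides. Your extra remark that ramified primes on the $K'$-side contribute nothing to the order of vanishing yet still count toward the ``exceptional'' set is a correct and worthwhile clarification, but the route is the same.
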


Using the characters $\chi_1, \dots, \chi_n$ we define a map $\phi:\cP_{K, p} \to \cP_{K', p}$, where $\phi(\mfp_i)$ is the unique prime lying over $p$ at which $\psi(\chi_i)$ does not have value $1$. 

\begin{lemma}\label{lemma:phi norm preserving}
The map $\phi$ is norm-preserving and we have $\psi(\chi_i)(\phi(\mfp_i)) = \zeta_l = \chi_i(\mfp_i)$ for any $1 \leq i \leq n$.
\end{lemma}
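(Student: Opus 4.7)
The plan is to extract both claims from a single polynomial identity. First, applying the hypothesis to the trivial character: since $\psi(1_K) = 1_{K'}$ and $L(1_K, s) = L(1_{K'}, s)$, Lemma~\ref{lemma:Lp equal} yields
\[
\prod_{j=1}^{n} (1 - T^{f_j}) = \prod_{j=1}^{n} (1 - T^{f'_j}).
\]

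Next, I would unpack the equality $L_p(\chi_i, T) = L_p(\psi(\chi_i), T)$ using Lemma~\ref{lemma:psichi only one prime}. Writing $\alpha := \psi(\chi_i)(\phi(\mfp_i))$ and $f' := f_{\phi(\mfp_i)}$, the equality reads
\[
(1 - \zeta_l T^{f_i})\prod_{j \neq i}(1 - T^{f_j}) = (1 - \alpha T^{f'}) \prod_{\mfq \neq \phi(\mfp_i)}(1 - T^{f'_\mfq}).
\]
By the previous step, $\prod_j (1-T^{f_j}) = \prod_\mfq (1-T^{f'_\mfq})$ as polynomials. Cancelling this nonzero common factor (viewed as rational functions) and clearing denominators yields the much cleaner identity
\[
(1 - \zeta_l T^{f_i})(1 - T^{f'}) = (1 - \alpha T^{f'})(1 - T^{f_i}).
\]

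Finally, I would conclude by comparing coefficients in this identity. If $f_i \neq f'$, the coefficient of $T^{f_i}$ on the left is $-\zeta_l$ while on the right it is $-1$, forcing $\zeta_l = 1$, which is absurd. Hence $f_i = f' = f_{\phi(\mfp_i)}$, which is the norm-preservation claim. Substituting $f_i = f'$ back into the identity and equating the coefficient of $T^{2f_i}$ gives $\alpha = \zeta_l$, the second claim. The only subtlety, and hence the main (minor) obstacle, lies in the cancellation in the middle step: one has to notice that the first step supplies exactly the common factor needed to collapse the identity to a four-term polynomial equation, after which coefficient comparison is immediate.
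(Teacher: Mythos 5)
Your proof is correct and rests on exactly the same ingredients as the paper's: the equality of local factors at $p$ for both $\chi_i$ and the trivial character, Lemma~\ref{lemma:psichi only one prime}, and an elementary coefficient comparison. The only cosmetic difference is that the paper subtracts $L_p(1_K, T)$ from $L_p(\chi_i, T)$ and reads off the lowest-degree term $(1-\zeta_l)T^{f_i}$, whereas you cancel the common factor $\prod_j(1-T^{f_j})$ and cross-multiply to get the four-term identity; both routes collapse to the same comparison and your cancellation step is valid.
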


\begin{proof}
By definition of $\chi_i$ we have
\[
L_p(\chi_i, T) = (1 - \zeta_l T^{f_i}) \prod_{j \neq i} (1 - T^{f_j}).
\]
The trivial character $1_K$ has a similar local $L$-series at $p$:
\[
L_p(1_K, T) = (1 - T^{f_i}) \prod_{j \neq i} (1 - T^{f_j}).
\]
Expanding both products, we see in particular that 
\[
[T^j]L_p(\chi_i, T) = [T^j]L_p(1_K, T) 
\]
for all $j < f_i$. Focusing on the coefficient of $T^{f_i}$ we find that
\begin{equation}\label{equation:K-side}
L_p(\chi_i, T) - L_p(1_K, T) = (1 - \zeta_l)T^{f_i} + \text{higher order terms}.
\end{equation}
We obtain a similar equality on the side of $K'$. Let $f'_{\phi(\mfp_j)}$ be the inertia degree of $\phi(\mfp_j)$. Then by Lemma~\ref{lemma:psichi only one prime}
\[
L_p(\psi(\chi_i), T) = \Big(1 - \psi(\chi_i)(\phi(\mfp_i)) T^{f'_{\phi(\mfp_i)}}\Big) \prod_{j \neq i} (1 - T^{f'_{\phi(\mfp_j)}}).
\]
Therefore,
\begin{equation}\label{equation:K'-side}
L_p(\psi(\chi_i), T) - L_p(1_{K'}, T) = \big(1 - \psi(\chi_i)(\phi(\mfp_i))\big)T^{f'_{\phi(\mfp_i)}} + \text{higher order terms}.
\end{equation}
Because $L_p(\chi_i, T) = L_p(\psi(\chi_i), T)$ and $L_p(1_K, T) = L_p(1_{K'}, T)$, we find by combining (\ref{equation:K-side}) and (\ref{equation:K'-side}) that $f_i = f'_{\phi(\mfp_i)}$ (thus $\phi$ is norm-preserving) and
\[
\psi(\chi_i)(\phi(\mfp_i)) = \zeta_l = \chi_i(\mfp_i). \qedhere
\]
\end{proof}

\begin{lemma}
The map $\phi: \cP_{K, p} \to \cP_{K', p}$ has the property that $\chi(\mfp) = \psi(\chi)(\phi(\mfp))$ for all $\chi \in \Hlab[l]$ and $\mfp \in \cP_{K, p}$. In particular $\phi$ is independent of the choice of the characters $\chi_i$. Moreover, $\phi$ is a bijection.
\end{lemma}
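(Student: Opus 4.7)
The lemma makes three assertions about $\phi$: the compatibility identity $\chi(\mfp) = \psi(\chi)(\phi(\mfp))$, independence from the choice of the auxiliary characters $\chi_1,\dots,\chi_n$, and bijectivity. I would prove these in the order: first injectivity (hence bijectivity, since $|\cP_{K,p}|=|\cP_{K',p}|=n$ follows from $\zeta_K=\zeta_{K'}$ via $\psi(1_K)=1_{K'}$), then compatibility, then independence.

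\textit{Injectivity.} Suppose $\phi(\mfp_a)=\phi(\mfp_b)=\mfq$ for some $a\ne b$. By construction $\psi(\chi_a)$ and $\psi(\chi_b)$ are both trivial on every prime of $K'$ over $p$ except at $\mfq$, where each equals $\zeta_l$. Hence $\psi(\chi_a\chi_b^{-1})$ is trivial on every prime of $K'$ over $p$, so $L_p(\psi(\chi_a\chi_b^{-1}),T)=L_p(1_{K'},T)=L_p(1_K,T)$ by Lemma~\ref{lemma:Lp equal}. Matching with the explicit form of $L_p(\chi_a\chi_b^{-1},T)$ reduces the problem to the polynomial identity
\[
(1-\zeta_l T^{f_a})(1-\zeta_l^{-1} T^{f_b}) = (1-T^{f_a})(1-T^{f_b}).
\]
Comparing the $T^{f_a}$-coefficient when $f_a\ne f_b$ gives $\zeta_l=1$, and when $f_a=f_b$ gives $\zeta_l+\zeta_l^{-1}=2$, which together with $|\zeta_l|=1$ again forces $\zeta_l=1$; both outcomes contradict primality of $l$.

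\textit{Compatibility.} For $\chi\in\Hlab[l]$, choose exponents $a_i\in\{0,\dots,l-1\}$ with $\chi(\mfp_i)=\zeta_l^{a_i}$ at every prime $\mfp_i$ at which $\chi$ is unramified, set $\tilde\chi:=\prod_i\chi_i^{a_i}$, and let $\eta:=\chi\tilde\chi^{-1}$. For $\tilde\chi$, multiplicativity of $\psi$ combined with Lemma~\ref{lemma:phi norm preserving} and injectivity of $\phi$ (which ensures $\psi(\chi_i)$ is trivial at $\phi(\mfp_j)$ whenever $i\ne j$) gives $\psi(\tilde\chi)(\phi(\mfp_j))=\zeta_l^{a_j}=\tilde\chi(\mfp_j)$. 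For $\eta$, Lemma~\ref{lemma:Lp equal} gives
\[
\prod_i(1-T^{f_i})=L_p(\eta,T)=L_p(\psi(\eta),T)=\prod_j\bigl(1-\psi(\eta)(\mfq_j)T^{f'_j}\bigr),
\]
and the order of vanishing at $T=1$ (equal to $n$ on the left, to $\#\{j:\psi(\eta)(\mfq_j)=1\}$ on the right) forces $\psi(\eta)(\mfq_j)=1$ for every $j$. Multiplying the two contributions yields the compatibility identity for unramified $\chi$; characters ramifying at some prime over $p$ reduce to this case after replacing $\tilde\chi$ by a character constructed from the full Grunwald-Wang theorem that matches $\chi$ locally at every prime over $p$.

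\textit{Independence and main obstacle.} If $\phi'$ is the map attached to another choice of auxiliary characters and also satisfies the compatibility identity, then $\psi(\chi)(\phi(\mfp))=\psi(\chi)(\phi'(\mfp))$ for every $\chi\in\Hlab[l]$, and since $\psi$ is surjective Lemma~\ref{lemma:grunwald-wang in any order} applied to $K'$ supplies a character separating $\phi(\mfp)$ from $\phi'(\mfp)$ unless they coincide, so $\phi=\phi'$. The main obstacle I expect is the compatibility step: equality of local $L$-factors at $p$ is a priori much weaker than pointwise equality of character values at matched primes, and the crucial trick is the order-of-vanishing argument at $T=1$, which turns a polynomial identity back into pointwise triviality of $\psi(\eta)$ on primes of $K'$ over $p$.
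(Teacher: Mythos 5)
Your bijectivity and independence steps are fine: the injectivity argument via the polynomial identity $(1-\zeta_l T^{f_a})(1-\zeta_l^{-1}T^{f_b})=(1-T^{f_a})(1-T^{f_b})$ is correct and is a mild variant of the paper, which instead deduces injectivity from the compatibility identity already proved for the $\chi_i$; the independence argument matches Lemma~\ref{lemma:prime-bijection-is-unique}. The genuine gap is in the compatibility step. Your argument with $\tilde\chi=\prod_i\chi_i^{a_i}$ and $\eta=\chi\tilde\chi^{-1}$ only works when $\chi$ is unramified at \emph{every} prime above $p$: only then is $L_p(\eta,T)=\prod_i(1-T^{f_i})$, and only then does the order of vanishing $n$ at $T=1$ force $\psi(\eta)(\mfq_j)=1$ at \emph{all} $n$ primes $\mfq_j$ rather than merely at the right number of them. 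The lemma, however, asserts $\chi(\mfp)=\psi(\chi)(\phi(\mfp))$ for all $\chi\in\Hlab[l]$, which (with the convention $\chi(\mfp)=0$ at ramified primes) includes both the mixed case, where $\chi$ is unramified at $\mfp_i$ but ramified at some other prime above $p$, and the assertion that $\psi$ transports ramification at $\mfp_i$ to ramification at $\phi(\mfp_i)$. Neither is covered by your main argument: in the mixed case the vanishing-order count at $T=1$ only matches cardinalities and does not identify at which primes $\psi(\eta)$ is trivial, and the ramified case is not touched at all.

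Your proposed reduction for these cases --- replace $\tilde\chi$ by a Grunwald--Wang character matching $\chi$ locally at every prime over $p$ --- is circular: it reduces the statement for $\chi$ to the identical statement for another character $\tilde\chi$ with exactly the same local behaviour above $p$, and you know nothing more about $\psi(\tilde\chi)$ than about $\psi(\chi)$ (the only available input is equality of $L$-series, which applies equally to both). In particular the key point that $\chi(\mfp_i)=0$ implies $\psi(\chi)(\phi(\mfp_i))=0$ is never proved. The paper avoids this by working one prime at a time: for unramified values it compares the orders of vanishing at $T=1$ of $L_p(\chi,T)$ and $L_p(\chi\chi_i^k,T)$, which handles the mixed case automatically; for the ramified case it notes $L_p(\chi,T)=L_p(\chi\chi_i,T)$, hence $L_p(\psi(\chi),T)=L_p(\psi(\chi\chi_i),T)$, and since $\psi(\chi)$ and $\psi(\chi\chi_i)$ agree at $\phi(\mfp)$ for every $\mfp\neq\mfp_i$ above $p$ while $\phi$ is a bijection on $\cP_{K,p}$, it cancels those factors to get $\psi(\chi)(\phi(\mfp_i))=\psi(\chi\chi_i)(\phi(\mfp_i))=\zeta_l\,\psi(\chi)(\phi(\mfp_i))$, forcing the value $0$. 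Some argument of this per-prime kind (or, after the mixed unramified case is secured, a degree count on $L_p$ showing ramified primes must map to ramified primes) is needed; as written, your proof establishes the identity only for characters unramified at all primes above $p$.
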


\begin{proof}
Let $1 \leq i \leq n$ and let $\chi \in \Hlab[l]$ be any character of order $l$. By definition of $\chi_i$, we have that $\chi\chi_i(\mfp) = \chi(\mfp)$ for any $\mfp \in \cP_{K, p}$ unequal to $\mfp_i$. This implies that the order of zero at $T = 1$ of $L_p(\chi, T)$ and $L_p(\chi \chi_i, T)$ differs by at most one, and this difference can be attributed entirely to the values of $\chi$ and $\chi_i$ at $\mfp_i$. Similarly by Lemma~\ref{lemma:psichi only one prime}, $\psi(\chi)(\mfq) = \psi(\chi \chi_i)(\mfq)$ for all $\mfq \mid p$ except a single prime $\phi(\mfp_i)$. Therefore the difference in the order of zero at $T = 1$ depends only on the values of $\psi(\chi)$ and $\psi(\chi_i)$.

Assume $\chi(\mfp_i) \neq 0$. We distinguish two cases for the value of $\chi(\mfp_i)$:
\begin{itemize}
\item Suppose $\chi(\mfp_i) = 1$. Then $\chi \chi_i(\mfp_i) = \zeta_l$, hence the order of zero at $T = 1$ of $L_p(\chi \chi_i, T)$ is one lower than the order of zero of $L_p(\chi, T)$ at $T = 1$. The same must therefore hold for $L_p(\psi(\chi \chi_i), T)$ and $L_p(\psi(\chi), T)$, hence $\psi(\chi)(\phi(\mfp_i)) = 1$.
\item Suppose $\chi$ is unramified at $\mfp_i$, and $\chi(\mfp_i) \neq 1$. Then $\chi(\mfp_i)$ is an $l^\text{th}$ root of unity. Let $k$ be such that $\chi(\mfp_i)^{-1} = \zeta_l^k = \chi_i(\mfp_i)^k$. Then $\chi\chi_i^k(\mfp_i) = 1$, while $\chi\chi_i^k(\mfp) = \chi(\mfp)$ for all $\mfp \neq \mfp_i$ lying over $p$. Thus the order of zero at $T = 1$ of $L_p(\chi \chi_i^k, T)$ is one higher than $L_p(\chi, T)$. The same therefore holds for $L_p(\psi(\chi \chi_i^k), T)$ and $L_p(\psi(\chi), T)$. Hence $\psi(\chi \chi_i^k)(\phi(\mfp_i)) = 1$, which combined with Lemma~\ref{lemma:phi norm preserving} shows that $\psi(\chi)(\phi(\mfp_i)) = \psi(\chi_i)(\phi(\mfp_i))^{-k} = \zeta_l^{-k} = \chi(\mfp_i)$.
\end{itemize}

This shows that $\chi(\mfp) = \psi(\chi)(\phi(\mfp))$ for all $\mfp$ lying over $p$ and all $\chi \in \Hlab[l]$, provided that $\chi(\mfp) \neq 0$.

We now show that $\phi: \cP_{K,p} \to \cP_{K', p}$ is a bijection. As $K$ and $K'$ have the same number of primes lying over $p$, it suffices to show $\phi$ is injective. Let $\mfp_i, \mfp_j \in \cP_{K, p}$ such that $\phi(\mfp_i) = \phi(\mfp_j)$. The character $\chi_i$ has value $\zeta_l$ at $\mfp_i$, hence $\psi(\chi_i)$ has value $\zeta_l$ at $\phi(\mfp_i)$ by the first part of this proof. However, $\chi_i$ has value $1$ at all primes in $\cP_{K, p}$ other than $\mfp_i$, hence 
\[
\chi_i(\mfp_j) = \psi(\chi_i)(\phi(\mfp_j)) = \psi(\chi_i)(\phi(\mfp_i)) \neq 1
\]
can only hold if $\mfp_i = \mfp_j$.

We end this proof by showing that if $\chi$ is ramified at $\mfp_i$, then $\psi(\chi)$ is ramified at $\phi(\mfp_i)$. Suppose $\chi$ is ramified at $\mfp_i$, i.e. $\chi(\mfp_i) = 0$. Then $\chi\chi_i(\mfp) = \chi(\mfp)$ for all $\mfp \in \cP_{K, p}$, hence $L_p(\chi, T) = L_p(\chi\chi_i, T)$. Thus we have $L_p(\psi(\chi), T) = L_p(\psi(\chi\chi_i), T)$. We already know that $\psi(\chi)(\phi(\mfp)) = \psi(\chi\chi_i)(\phi(\mfp))$ for any $\mfp \in \cP_{K, p}$ unequal to $\mfp_i$, hence
\[
\prod_{\substack{\mfp \in \cP_{K, p} \\ \mfp \neq \mfp_i}} L_{\phi(\mfp)}(\psi(\chi), T) = \prod_{\substack{\mfp \in \cP_{K, p} \\ \mfp \neq \mfp_i}} L_{\phi(\mfp)}(\psi(\chi\chi_i), T).
\]
As $\phi$ is a bijection $\cP_{K,p} \to \cP_{K', p}$ it follows from this equality combined with the equality $L_p(\psi(\chi), T) = L_p(\psi(\chi\chi_i), T)$ that $L_{\mfp_i}(\psi(\chi), T) = L_{\mfp_i}(\psi(\chi\chi_i), T)$, i.e.\ we have that $\psi(\chi)(\phi(\mfp_i)) = \psi(\chi\chi_i)(\phi(\mfp_i))$. As $\psi(\chi\chi_i)(\phi(\mfp_i)) = \zeta_l \cdot \psi(\chi)(\phi(\mfp_i))$, we conclude that  $\psi(\chi)(\phi(\mfp_i)) = 0$.
\end{proof}

This shows there is a norm-preserving bijection $\phi: \cP_K \to \cP_{K'}$ such that for all $\chi \in \Hlab[l]$ and $\mfp \in \cP_K$ we have
\[
\psi(\chi)(\phi(\mfp)) = \chi(\mfp).
\]
Hence $\Theta_3: \psi \mapsto \psi$ is a well-defined injective map.

This establishes the bijections $\Theta_1$, $\Theta_2$, and $\Theta_3$. We end this section with a corollary of the proven result.

\begin{corollary}
Let $\psi \in \IsoPx$. The associated injective norm-preserving map of primes $\phi: \cS \to \cP_{K'}$ can be extended uniquely to a bijection of primes $\phi: \cP_{K} \to \cP_{K'}$ for which
\[
\psi(\chi)(\phi(\mfp)) = \chi(\mfp)
\]
for all $\chi \in \Hlab[l]$ and $\mfp \in \cP_{K}$.
\end{corollary}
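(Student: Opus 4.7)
The plan is to deduce the corollary from the already-established correspondence $\IsoL \cong \IsoP$ (via $\Theta_2$ and $\Theta_3$) together with the inclusion $\IsoP \hookrightarrow \IsoPx$ (via $\Theta_1$). Concretely, I will show that any $\psi \in \IsoPx$ already lies in $\IsoL$; once that is established, the construction given in the proof of $\Theta_3$ produces a norm-preserving bijection $\phi \colon \cP_K \to \cP_{K'}$ satisfying the full compatibility $\chi(\mfp) = \psi(\chi)(\phi(\mfp))$ for every $\mfp \in \cP_K$ and every $\chi \in \Hlab[l]$. Applying the Grunwald--Wang based uniqueness argument of Lemma~\ref{lemma:prime-bijection-is-unique} to the restriction $\phi|_{\cS}$ and to the given $\phi_0$ --- both injective compatibility-respecting maps on $\cS$ --- forces $\phi|_{\cS} = \phi_0$, producing the required extension. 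Uniqueness of the extension then follows by the same Grunwald--Wang argument applied to any two candidate bijections.

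To establish $\psi \in \IsoL$, by Lemma~\ref{lemma:Lp equal} it suffices to prove $L_p(\chi, T) = L_p(\psi(\chi), T)$ for every rational prime $p$ and every $\chi \in \Hlab[l]$. The first step is a symmetrization: $\psi^{-1}$ together with the injective norm-preserving map $\phi_0^{-1} \colon \phi_0(\cS) \to \cP_K$ also lies in the analogous $\IsoPx$ set on the $K'$ side, since $\phi_0(\cS)$ has Dirichlet density one in $\cP_{K'}$ by the norm-preservation of $\phi_0$. Combining the forward and backward compatibility, for every rational prime $p$ outside a Dirichlet density zero set one has $\cP_{K, p} \subseteq \cS$ and $\cP_{K', p} \subseteq \phi_0(\cS)$, and the two injections furnished by $\phi_0$ and $\phi_0^{-1}$ force $\phi_0$ to restrict to a norm-preserving bijection $\cP_{K, p} \to \cP_{K', p}$. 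For such generic $p$, expanding the local $L$-factor using the compatibility immediately yields $L_p(\chi, T) = L_p(\psi(\chi), T)$ for every $\chi \in \Hlab[l]$.

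The main obstacle is the residual density-zero set $E$ of exceptional rational primes $p$ at which some prime of $K$ or $K'$ above $p$ is not in the compatibility data. For each $p \in E$, the plan is to use Lemma~\ref{lemma:grunwald-wang in any order} to construct characters $\chi \in \Hlab[l]$ that isolate a chosen prime above $p$ (value $\zeta_l$ at that prime, value $1$ at the other primes above $p$) and take freely prescribed values at a finite set of primes in $\cS$. A coefficient-by-coefficient comparison of $L_p(\chi, T)$ and $L_p(\psi(\chi), T)$, mirroring the arguments of Lemmas~\ref{lemma:psichi only one prime} and~\ref{lemma:phi norm preserving} but using the $\cS$-compatibility (which controls $\psi(\chi)$ on the density-one subset $\phi_0(\cS)$) in place of an a priori $L$-series equality, will pin down the missing local factors at $p \in E$ and complete the proof that $\psi \in \IsoL$.
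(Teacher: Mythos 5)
The final portion of your plan (once some full compatible bijection $\phi'\colon \cP_K \to \cP_{K'}$ is available, use Lemma~\ref{lemma:grunwald-wang in any order} to force $\phi'$ to agree with the given map on $\cS$, and the same argument for uniqueness) is exactly how the paper concludes. The gap is in how you produce $\phi'$: you propose to show directly that $\psi \in \IsoL$, and this step does not work as sketched. First, the intermediate claim that outside a density-zero set of rational primes $p$ one has $\cP_{K,p}\subseteq \cS$ and $\cP_{K',p}\subseteq \phi_0(\cS)$ is false: the primes of inertia degree $\geq 2$ form a density-zero subset of $\cP_K$, so a density-one $\cS$ may contain none of them, and then every rational prime that is not totally split in $K$ is ``exceptional'' --- typically a positive-density set of rational primes. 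More seriously, your treatment of the exceptional primes has no engine: the coefficient comparisons of Lemmas~\ref{lemma:psichi only one prime} and~\ref{lemma:phi norm preserving} take the identity $L_p(\chi,T)=L_p(\psi(\chi),T)$ as \emph{input} (it is the defining property of $\IsoL$) and extract the prime correspondence from it, whereas you want to run them in reverse. But the $\cS$-compatibility gives no information whatsoever about the values $\psi(\chi)(\mfq)$ at primes $\mfq\mid p$ of $K'$ outside $\phi_0(\cS)$, and there is no a priori identity whose coefficients you could compare --- equality of the local factors at such $p$ is precisely what you are trying to prove. Grunwald--Wang lets you prescribe $\chi$ at finitely many primes of $K$, but it says nothing about $\psi(\chi)$ at the uncontrolled primes of $K'$.

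Passing from compatibility on a density-one set to control at \emph{all} primes is exactly the content the paper extracts from Theorem~\ref{theorem:positive_density_result}, whose proof is global: the automorphism $\sigma$ of the Galois closure following $\phi$, the finite-image map $\alpha$ of Lemma~\ref{lem:finite-image}, and Chebotarev. Remark~\ref{remark:counterexample} shows that the analogous statement fails for compatibility on a positive-density set, so any correct argument must exploit density one globally rather than prime-by-prime; your purely local manipulations would apply verbatim in the positive-density setting and hence cannot suffice. Accordingly, the paper's own proof of this corollary first upgrades $\psi$ to an element of $\IsoP$ by appealing to the already-established correspondences between the isomorphism sets (ultimately resting on Theorem~\ref{theorem:positive_density_result}), and only then runs the Grunwald--Wang agreement and uniqueness argument; your proposal would need to replace the unproven ``pin down the missing local factors'' step by that global machinery.
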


\begin{proof}
If $\phi$ extends to a bijection $\cP_K \to \cP_{K'}$, we know it extends uniquely by Lemma~\ref{lemma:prime-bijection-is-unique}. Following maps $\Theta_2$ and $\Theta_3$, we see that $\psi \in \IsoP$, i.e., there is a norm-preserving bijection $\phi': \cP_K \to \cP_{K'}$ such that for any $\chi \in \Hlab[l]$ and any $\mfp \in \cP_K$ we have 
\[
\chi(\mfp) = \psi(\chi)(\phi'(\mfp)).
\]
We show that $\phi'$ extends $\phi$ in a manner similar to Lemma~\ref{lemma:prime-bijection-is-unique}. Let $\mfp \in \cS$, then 
\[
\psi(\chi)(\phi(\mfp)) = \chi(\mfp) =  \psi(\chi)(\phi'(\mfp))
\]
for any $\chi \in \Hlab[l]$. If $\phi(\mfp) \neq \phi'(\mfp)$, then by Lemma~\ref{lemma:grunwald-wang in any order} there exists a character $\chi' \in \Hlabx[l]$ such that $\chi'(\phi(\mfp)) = 1$ and $\chi'(\phi'(\mfp)) \neq 1$. We obtain a contradiction by setting $\chi = \psi^{-1}(\chi')$. 
\end{proof}

\section{Maps $\Theta_4$ and $\Theta_5$}\label{sec:maps 4 and 5}
In this section we construct an injective maps between $\Isofield$ and $\IsoP$ and between $\IsoPx$ and $\Isofield$. The majority of the section is devoted to the map $\Theta_5$: it is constructed by first moving to the Galois closure, finding a suitable automorphism of this Galois closure, and then showing that this restricts to an isomorphism $K \DistTo K'$. 

Elements of $\Isofield$, $\IsoP$, and $\IsoPx$ come equipped with a bijection of primes, and they are uniquely determined by this bijection of primes (see Corollaries~\ref{cor:psi1-psi2} and~\ref{corollary:sigma is determined by bijection of primes}). To show that $\Theta_1 \circ \Theta_4$ and $\Theta_5$ are mutual inverses, we prove that both maps (after being defined) ``preserve the bijection of primes'', that is, we show that the bijection of primes associated to an element is the same as the bijection of primes of the image of this element under either $\Theta_1 \circ \Theta_4$ or $\Theta_5$.

\subsection*{The map $\Theta_4$} \label{sec:Isofield-to-IsoP}\leavevmode \\
Any field isomorphism $\sigma: K \DistTo K'$ has an associated bijection of primes $\cP_{K} \to \cP_{K'}$, also denoted by $\sigma$. Furthermore, $\sigma$ induces an isomorphism of Galois groups $\Gkabx \DistTo \Gkab$ given by conjugation with any $\tau: K^\textup{ab} \DistTo {\big(K'\big)}^\textup{ab}$ that is an extension of $\sigma$ (the map is independent of the choice of $\tau$). By dualizing we obtain a map\begin{equation}\label{equation:psi-sigma}
\psi_\sigma: \Hlab[l] \DistTo \Hlabx[l]. 
\end{equation}

Let $\chi \in \Hlab[l]$ be any character. Restricting $\tau$ to $K_\chi$ gives an isomorphism $K_{\chi} \DistTo K_{\psi_\sigma(\chi)}$. Hence $\chi$ is ramified at $\mfp \in \cP_{K}$ if and only if $\psi_\sigma(\chi)$ is ramified at $\sigma(\mfp)$. Let $\mfp \in \cP_{K}$ be a prime at which $\chi$ is unramified. Let $\Frob{\mfp} \in \Gal(K_{\chi}/K)$ be the Frobenius element at $\mfp$ and let $\Frob{\sigma(\mfp)} \in \Gal(K'_{\psi_\sigma(\chi)}/K')$ be the Frobenius element at $\sigma(\mfp)$. The aforementioned map $\Gkabx \DistTo \Gkab$ has a quotient $\Gal(K'_{\psi_\sigma(\chi)}/K') \DistTo \Gal(K_{\chi}/K)$ that maps $\Frob{\sigma(\mfp)}$ to $\Frob{\mfp}$. As a result, for any $\mfp \in \cP_{K}$ and $\chi \in \Hlab[l]$ we have $\psi_\sigma(\chi)(\sigma(\mfp)) = \chi(\mfp)$. Thus $\psi_\sigma \in \IsoP$, with corresponding bijection of primes $\sigma: \cP_K \to \cP_{K'}$. This proves the following corollary:

\begin{corollary}\label{corollary:psi has same bijection of primes as sigma}
For any field isomorphism $\sigma: K \DistTo K'$ there is a group isomorphism $\psi_\sigma: \Hlab[l] \DistTo \Hlabx[l]$ such that
\[
\psi_\sigma(\chi)(\sigma(\mfp)) = \chi(\mfp),
\]
for any $\chi \in \Hlab[l]$ and $\mfp \in \cP_K$. 
\end{corollary}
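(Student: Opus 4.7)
The plan is to construct $\psi_\sigma$ functorially from $\sigma$ and then trace through how Frobenius elements are transported. First I would fix an extension $\tau: \overline{\Qz} \DistTo \overline{\Qz}$ of $\sigma$ (which exists since $\overline{\Qz}$ is algebraic and $\sigma$ is a field embedding into $\overline{\Qz}$). Since $\tau$ sends abelian extensions of $K$ to abelian extensions of $K'$, it restricts to an isomorphism $K^{\textup{ab}} \DistTo (K')^{\textup{ab}}$, and conjugation $g \mapsto \tau^{-1} g \tau$ induces a group isomorphism $\Gkabx \DistTo \Gkab$. Dualising then yields $\psi_\sigma: \Hlab[l] \DistTo \Hlabx[l]$.

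Next I would verify that this map is independent of the choice of $\tau$: any two extensions of $\sigma$ differ by an element of $G_{K'}$, and conjugation by an element of $G_{K'}$ acts trivially on the abelianisation $\Gkabx$, so the induced map on abelianisations, and hence on characters, is the same.

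Then I would check the Frobenius compatibility. Given $\chi \in \Hlab[l]$, the cyclic extension $K_\chi/K$ is sent by $\tau$ to a cyclic extension of $K'$ which, by construction of $\psi_\sigma$, coincides with $K'_{\psi_\sigma(\chi)}$. Hence $\chi$ is ramified at $\mfp$ if and only if $\psi_\sigma(\chi)$ is ramified at $\sigma(\mfp)$, taking care of the convention $\chi(\mfp) = 0$ on ramified primes. For $\mfp$ unramified in $K_\chi/K$, functoriality of the Frobenius under field isomorphisms shows that the image of $\Frob{\sigma(\mfp)} \in \Gal(K'_{\psi_\sigma(\chi)}/K')$ under the induced map $\Gal(K'_{\psi_\sigma(\chi)}/K') \DistTo \Gal(K_\chi/K)$ is $\Frob{\mfp}$. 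Evaluating characters then gives
\[
\psi_\sigma(\chi)(\sigma(\mfp)) = \psi_\sigma(\chi)(\Frob{\sigma(\mfp)}) = \chi(\Frob{\mfp}) = \chi(\mfp),
\]
as required.

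I do not anticipate a serious obstacle here: the whole statement is essentially a repackaging of Galois-theoretic functoriality combined with the definition of $\chi(\mfp)$ via Frobenius. The only point requiring a moment of care is the well-definedness under the choice of $\tau$, which is precisely why one has to pass to the \emph{abelianised} Galois groups before dualising.
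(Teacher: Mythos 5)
Your proposal is correct and follows essentially the same route as the paper: construct $\psi_\sigma$ by extending $\sigma$ to (an isomorphism restricting to) $K^{\textup{ab}} \DistTo (K')^{\textup{ab}}$, conjugate to get $\Gkabx \DistTo \Gkab$, dualise, and then transport Frobenius elements (and ramification) through the induced quotient maps $\Gal(K'_{\psi_\sigma(\chi)}/K') \DistTo \Gal(K_\chi/K)$. The only difference is cosmetic — you spell out the independence of the choice of $\tau$, which the paper merely asserts in passing.
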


Define the map $\Theta_4$ by $\sigma \mapsto \psi_\sigma$. We prove that it is injective.

\begin{lemma}\label{lemma:sigma is determined by its bijection of primes}
Let $K/k$ be an extension of number fields, $\sigma \in \textup{Aut}(K/k)$, and $\mfp$ a prime that lies over a totally split prime in $k$. If $\sigma(\mfp) = \mfp$, then $\sigma$ is the identity. 
\end{lemma}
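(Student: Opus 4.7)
The plan is to pass to the Galois closure. Write $L$ for the Galois closure of $K/k$, set $G = \Gal(L/k)$ and $H = \Gal(L/K)$, so that $K = L^H$ and $\Aut(K/k) = N_G(H)/H$. Let $\mathfrak{p}_0 \subset \mathcal{O}_k$ denote the totally split prime below $\mfp$, and fix a prime $\mathfrak{P}$ of $L$ above $\mfp$ with decomposition group $D = D_\mathfrak{P} \subseteq G$.

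The first step is to exploit the splitting hypothesis to show $D \subseteq H$. Since $\mathfrak{p}_0$ splits completely in $K/k$, we have $e(\mfp/\mathfrak{p}_0) = f(\mfp/\mathfrak{p}_0) = 1$. Combining the identities $e(\mathfrak{P}/\mathfrak{p}_0)f(\mathfrak{P}/\mathfrak{p}_0) = |D|$ for the Galois extension $L/k$ and $e(\mathfrak{P}/\mfp)f(\mathfrak{P}/\mfp) = |D \cap H|$ for $L/K$ (also Galois), with multiplicativity of $e$ and $f$ through the tower, I would conclude that $|D \cap H| = |D|$, i.e.\ $D \subseteq H$.

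Next, lift $\sigma$ to an automorphism $\tilde\sigma \in G$; this is possible because $\sigma$ is a $k$-embedding $K \hookrightarrow \overline{k}$ and $L/k$ is Galois, so $\sigma$ extends to a $k$-automorphism of $L$ (which automatically lies in $N_G(H)$). The assumption $\sigma(\mfp) = \mfp$ yields $\tilde\sigma(\mfp) = \mfp$, so $\tilde\sigma$ permutes the primes of $L$ lying above $\mfp$. Since $L/K$ is Galois, these primes form a single $H$-orbit, so there exists $h \in H$ with $\tilde\sigma \mathfrak{P} = h\mathfrak{P}$. Then $h^{-1}\tilde\sigma \in D \subseteq H$, hence $\tilde\sigma \in H$, which is exactly the statement that $\sigma$ acts trivially on $K = L^H$.

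The only delicate point is the first step, where the full force of ``totally split'' is used: if $\mathfrak{p}_0$ only split partially in $K/k$, one would obtain $|D \cap H| < |D|$ and could no longer conclude $\tilde\sigma \in H$ from $\tilde\sigma\mathfrak{P} \in H\mathfrak{P}$. Everything after that is a standard orbit argument on primes under the Galois action.
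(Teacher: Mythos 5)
Your proof is correct, and its skeleton is the same as the paper's: pass to the Galois closure $L$, extend $\sigma$ to $\tilde\sigma \in \Gal(L/k)$, use transitivity of $H = \Gal(L/K)$ on the primes of $L$ above $\mfp$ to replace $\tilde\sigma$ by an element stabilizing a fixed prime $\mathfrak{P}$, and conclude that $\tilde\sigma \in H$. The difference lies in how that last step is justified. The paper first proves the Galois case by counting (total splitting makes the action of the Galois group on the primes above $\mfp \cap k$ simply transitive), then notes that $\mfp \cap k$ is totally split in the closure as well, so the stabilizing element is forced to be the identity; in effect it uses triviality of the decomposition group of $\mathfrak{P}$ over $k$. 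You instead prove only the containment $D_{\mathfrak{P}}(L/k) \subseteq H$, via multiplicativity of $e$ and $f$, which already gives $h^{-1}\tilde\sigma \in H$ and hence $\tilde\sigma \in H$. This buys a slightly sharper statement: your argument only needs $e(\mfp/\mfp\cap k) = f(\mfp/\mfp\cap k) = 1$, i.e.\ that $\mfp$ is unramified of residue degree one over $k$, not that $\mfp \cap k$ is totally split in $K$ (in the non-Galois case a degree-one unramified prime need not lie over a totally split prime). So your closing remark that the ``full force'' of total splitting is used actually undersells your own proof. Both routes are equally elementary; the paper's has the minor virtue of isolating the Galois case as a reusable statement, while yours generalizes the hypothesis and avoids having to check that total splitting propagates to the Galois closure.
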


\begin{proof}
We first consider the case where $K/k$ is Galois. The action of $\Gal(K/k)$ on the primes of $K$ lying over $\mfp \cap k$ is transitive. Moreover, $\#\Gal(K/k) = [K:k]$ equals the number of primes lying over $\mfp \cap k$, thus an automorphism of $K$ is uniquely determined by the image of $\mfp$. 

For the general case, denote by $N$ the Galois closure (over $k$) of $K$. As $\mfp \cap k$ is totally split in $K$, it is totally split in all of the Galois conjugates of $K/k$, whose union is $N$. Thus $\mfp \cap k$ is totally split in $N$ as well. Let $\tau$ be any extension of $\sigma$ to $N$, and let $\mfP$ be any prime lying over $\mfp$. As $\sigma(\mfp) = \mfp$, $\tau$ permutes the primes lying over $\mfp$. This implies that there exists a $\tau' \in \Gal(N/K)$ such that $\tau \tau'(\mfP) = \mfP$, as $\Gal(N/K)$ acts transitively on the primes lying over $\mfp$. By the previous paragraph, $\tau \tau'$ is the identity. As $\tau' \in \Gal(N/K)$, we conclude that $\tau \in \Gal(N/K)$, hence $\sigma$ is the identity.
\end{proof}

\begin{corollary}\label{corollary:sigma is determined by bijection of primes}
Let $\sigma_1, \sigma_2 \in \Isofield$. If they induce the same bijection of primes, then they are equal.
\end{corollary}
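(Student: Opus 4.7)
The plan is to reduce the corollary to Lemma~\ref{lemma:sigma is determined by its bijection of primes} by forming the composite $\sigma \defeq \sigma_2^{-1} \circ \sigma_1$. Since $\sigma_1$ and $\sigma_2$ are both isomorphisms $K \DistTo K'$, this composite is an element of $\textup{Aut}(K) = \textup{Aut}(K/\Qz)$, and our goal reduces to showing $\sigma = \id_K$.

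The key observation is that the hypothesis — that $\sigma_1$ and $\sigma_2$ induce the same bijection $\cP_K \to \cP_{K'}$ — immediately implies that $\sigma$ fixes every prime of $K$: for any $\mfp \in \cP_K$ we have $\sigma_1(\mfp) = \sigma_2(\mfp)$, and applying $\sigma_2^{-1}$ (which by functoriality sends $\sigma_2(\mfp)$ back to $\mfp$) gives $\sigma(\mfp) = \mfp$. So $\sigma$ fixes \emph{all} primes of $K$, which is far more than what Lemma~\ref{lemma:sigma is determined by its bijection of primes} requires.

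To invoke Lemma~\ref{lemma:sigma is determined by its bijection of primes} with the base field $k = \Qz$, I need just one prime $\mfp \in \cP_K$ which lies over a rational prime that is totally split in $K$. Such primes exist in abundance: letting $N$ denote the Galois closure of $K/\Qz$, the Chebotarev density theorem ensures that infinitely many rational primes split completely in $N$, and any such rational prime in particular splits completely in $K$. Pick any prime $\mfp$ of $K$ lying above such a rational prime. Since $\sigma$ fixes $\mfp$, Lemma~\ref{lemma:sigma is determined by its bijection of primes} gives $\sigma = \id_K$, hence $\sigma_1 = \sigma_2$.

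There is no real obstacle here; the statement is essentially a reformulation of Lemma~\ref{lemma:sigma is determined by its bijection of primes} applied to the automorphism $\sigma_2^{-1}\sigma_1$ of $K$ over $\Qz$. The only thing to verify beyond the lemma's hypotheses is the existence of a prime of $K$ lying over a totally split rational prime, which is immediate from Chebotarev applied to the Galois closure.
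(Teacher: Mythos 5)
Your proof is correct and follows essentially the same route as the paper, which simply applies Lemma~\ref{lemma:sigma is determined by its bijection of primes} to the composite (the paper uses $\sigma_1\sigma_2^{-1}\in\textup{Aut}(K')$ rather than your $\sigma_2^{-1}\sigma_1\in\textup{Aut}(K)$, an immaterial difference). Your additional details — that the composite fixes every prime and that a prime over a totally split rational prime exists by Chebotarev applied to the Galois closure — are exactly the steps the paper leaves implicit.
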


\begin{proof}
Apply the previous lemma to $\sigma_1 \sigma_2^{-1}$.
\end{proof}

\begin{lemma}
The map $\Theta_4: \sigma \mapsto \psi_\sigma$ is injective.
\end{lemma}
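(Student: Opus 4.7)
The plan is to reduce the injectivity of $\Theta_4$ to Corollary~\ref{corollary:sigma is determined by bijection of primes} by showing that an equality $\psi_{\sigma_1} = \psi_{\sigma_2}$ forces the underlying bijections of primes of $\sigma_1$ and $\sigma_2$ to agree.

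Suppose $\sigma_1, \sigma_2 \in \Isofield$ satisfy $\psi_{\sigma_1} = \psi_{\sigma_2} =: \psi$. By Corollary~\ref{corollary:psi has same bijection of primes as sigma}, for every $\chi \in \Hlab[l]$ and every $\mfp \in \cP_K$ we have
\[
\psi(\chi)(\sigma_1(\mfp)) = \chi(\mfp) = \psi(\chi)(\sigma_2(\mfp)).
\]
Since $\psi$ is a bijection $\Hlab[l] \DistTo \Hlabx[l]$, as $\chi$ ranges over $\Hlab[l]$ the character $\psi(\chi)$ ranges over all of $\Hlabx[l]$. Thus $\chi'(\sigma_1(\mfp)) = \chi'(\sigma_2(\mfp))$ for every $\chi' \in \Hlabx[l]$ and every $\mfp \in \cP_K$.

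The key step is to upgrade this numerical coincidence to the equality $\sigma_1(\mfp) = \sigma_2(\mfp)$. If there existed a prime $\mfp \in \cP_K$ with $\sigma_1(\mfp) \neq \sigma_2(\mfp)$, then Lemma~\ref{lemma:grunwald-wang in any order} applied to the finite set $\{\sigma_1(\mfp), \sigma_2(\mfp)\} \subseteq \cP_{K'}$ would produce a character $\chi' \in \Hlabx[l]$ with $\chi'(\sigma_1(\mfp)) = 1$ and $\chi'(\sigma_2(\mfp)) = \zeta_l \neq 1$, a contradiction. Hence $\sigma_1$ and $\sigma_2$ induce the same bijection $\cP_K \to \cP_{K'}$, and Corollary~\ref{corollary:sigma is determined by bijection of primes} then yields $\sigma_1 = \sigma_2$.

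I do not expect any genuine obstacle here: the whole argument is a two-step reduction (first to equality of prime bijections via Grunwald-Wang, then to equality of field isomorphisms via the preceding corollary), and both ingredients have already been established in the preceding sections. The only point that deserves a moment's care is verifying that $\psi(\chi)$ really sweeps out all of $\Hlabx[l]$, which is immediate from $\psi$ being an isomorphism, so that the Grunwald-Wang step can be applied unconditionally.
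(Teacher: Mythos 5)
Your proof is correct and follows essentially the same route as the paper: both reduce to showing $\sigma_1$ and $\sigma_2$ induce the same bijection of primes and then invoke Corollary~\ref{corollary:sigma is determined by bijection of primes}. The only difference is that you re-derive the uniqueness of the compatible prime bijection inline via Lemma~\ref{lemma:grunwald-wang in any order}, whereas the paper simply cites Lemma~\ref{lemma:prime-bijection-is-unique}, whose proof is exactly your Grunwald--Wang argument.
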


\begin{proof}
Suppose $\sigma_1$ and $\sigma_2$ have the same image. By Corollary~\ref{corollary:psi has same bijection of primes as sigma} and Lemma~\ref{lemma:prime-bijection-is-unique}, $\sigma_1$ and $\sigma_2$ the induce the same bijection of primes, hence by Corollary~\ref{corollary:sigma is determined by bijection of primes} we have $\sigma_1 = \sigma_2$.
\end{proof}

\subsection*{The map $\Theta_5$} \leavevmode \\
\new{In this section we constuct a map $\Theta_5: \IsoPx \to \Isofield$ that is an inverse of the composite $\Theta_1 \circ \Theta_4: \Isofield \to \IsoPx$. For brevity we refer to Dirichlet density simply as ``density''. The main theorem we prove is Theorem~\ref{theoremA:pos_density}:}

\begin{theorem}\label{theorem:positive_density_result}
\new{Let $K$ and $K'$ be number fields of the same degree, and let $\cS$ be a subset of the primes of inertia degree $1$ of $K$. Suppose that for some finite extension $L/K$, $\cS$ contains $\emph{\text{Spl}}(L/K)$ except for a density zero set. Furthermore, suppose there exists an isomorphism $\psi: \Hlab[l] \DistTo \Hlabx[l]$ with an injective norm-preserving map $\phi: \cS \to \cP_{K'}$ such that
\[
\chi(\mfp) = \psi(\chi)(\phi(\mfp))
\]
Then $K \simeq K'$ and there is a unique $\sigma_\psi: K \DistTo K'$ such that the bijection of primes induced by $\sigma_\psi$ equals $\phi$ on $\cS$ except for finitely many exceptions. }
\end{theorem}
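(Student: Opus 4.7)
The plan is to construct $\sigma_\psi$ by realizing it as the restriction of a carefully chosen element of $\Gal(L/\Qz)$, after first enlarging $L$ to contain $K'$ and all its Galois conjugates. I would begin by replacing $L$ with the Galois closure of $LK'$ over $\Qz$: this makes $L/\Qz$ Galois with $K,K' \subseteq L$, and only shrinks $\mathrm{Spl}(L/K)$ (and hence the relevant portion of $\cS$) to a smaller but still positive-density subset. For any $\mfp \in \cS$ that is totally split in $L/K$, every prime of $L$ above $\mfp$ has degree $1$ over $\Qz$; since $\phi$ is norm-preserving, the rational prime $p = N\mfp = N\phi(\mfp)$ therefore splits completely in $L$, whence $\phi(\mfp)$ also splits completely in $L/K'$. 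In particular, I may pick a prime $\mfQ$ of $L$ above $\phi(\mfp)$ of degree $1$ over $\Qz$, alongside a prime $\mfP$ of $L$ above $\mfp$.

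The central step is to extract a single $\tau \in \Gal(L/\Qz)$ with $\tau(K) = K'$ that controls a positive-density family of such pairs $(\mfP,\mfQ)$. For each pair there is a unique $\tau_{\mfP,\mfQ} \in \Gal(L/\Qz)$ with $\tau_{\mfP,\mfQ}(\mfP) = \mfQ$; varying $\mfP$ over its $\Gal(L/K)$-orbit and $\mfQ$ over its $\Gal(L/K')$-orbit changes $\tau_{\mfP,\mfQ}$ through the double coset $\Gal(L/K')\, \tau_{\mfP,\mfQ}\, \Gal(L/K)$. Since $\Gal(L/K') \backslash \Gal(L/\Qz) / \Gal(L/K)$ is finite, at least one double coset captures a positive-density subfamily of primes; fix a representative $\tau$. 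To verify $\tau(K) = K'$, I would reinterpret the hypothesis $\chi(\mfp) = \psi(\chi)(\phi(\mfp))$ via class field theory: it equates the Frobenius of $\mfp$ in $\Gal(L/K)^{\mathrm{ab}}/l$ with the image under $\psi^\vee$ of the Frobenius of $\phi(\mfp)$ in $\Gal(L/K')^{\mathrm{ab}}/l$. Using Lemma~\ref{lemma:grunwald-wang in any order} to supply enough test characters of order $l$, the consistency of this matching along the positive-density subfamily forces $\tau^{-1}\Gal(L/K')\tau = \Gal(L/K)$ at the mod-$l$ abelianization level. Combined with the same-degree hypothesis $[K:\Qz] = [K':\Qz]$, this upgrades to the field-theoretic statement $\tau(K) = K'$.

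Setting $\sigma_\psi := \tau|_K: K \DistTo K'$ yields the sought field isomorphism. By construction, the bijection of primes induced by $\sigma_\psi$ coincides with $\phi$ on the positive-density subfamily for which $\tau$ was extracted. To promote this to \emph{agreement with $\phi$ on $\cS$ up to finitely many exceptions}, I would mimic the argument of Lemma~\ref{lemma:prime-bijection-is-unique}: if $\sigma_\psi(\mfp) \neq \phi(\mfp)$ for infinitely many $\mfp \in \cS$, the Grunwald--Wang theorem produces a character detecting this discrepancy, contradicting the compatibility between $\phi$, $\sigma_\psi$, and $\psi$. Uniqueness of $\sigma_\psi$ as a field isomorphism then follows from Corollary~\ref{corollary:sigma is determined by bijection of primes}.

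The hard part will be the second paragraph: the hypothesis provides only abelian mod-$l$ data, whereas the target $\tau \in \Gal(L/\Qz)$ encodes genuine non-abelian Galois information. Pigeonhole straightforwardly supplies a double coset, but confirming that it contains a representative which conjugates $\Gal(L/K')$ onto $\Gal(L/K)$, and deducing from this the field-theoretic consequence $\tau(K) = K'$, requires a careful conversion of the character-theoretic compatibility (via $\psi$ and Grunwald--Wang) into non-abelian Galois-conjugacy information. This bridging between the abelian hypothesis and the non-abelian conclusion is the technical core of the argument.
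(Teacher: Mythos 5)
Your opening moves are fine and run parallel to the paper: pass to a Galois closure $N$ of $LK'$ over $\Qz$, and pigeonhole over the finitely many double cosets $\Gal(N/K')\backslash\Gal(N/\Qz)/\Gal(N/K)$ (the paper pigeonholes over single elements $\sigma\in\Gal(N/\Qz)$, which is essentially equivalent) to obtain one $\tau$ that sends, for a positive-density family of $\mfp\in\cS$, \emph{some} prime above $\mfp$ to a prime above $\phi(\mfp)$. But the step you yourself flag as the technical core --- deducing $\tau(K)=K'$ --- is genuinely missing, and the route you sketch would not work as stated: knowing that $\tau$ conjugates $\Gal(N/K')$ to $\Gal(N/K)$ ``at the mod-$l$ abelianization level'' does not imply conjugacy of the subgroups themselves, which is what $\tau(K)=K'$ requires, and the equal-degree hypothesis cannot upgrade abelianized data to that non-abelian statement. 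The paper bridges this divide with two ingredients you do not have. First, Lemma~\ref{lem:restriction}: it suffices to exhibit a single rational prime $p$ totally split in $N$ and one $\mfp\mid p$ of $K$ such that $\sigma(\mfP)\cap K'$ is the \emph{same} prime for every $\mfP\mid\mfp$ (your pigeonhole only controls one $\mfP$ per $\mfp$, not all of them simultaneously). Second, to get that simultaneous control one shows the map $\alpha(\chi)=\psi_\sigma(\iota_K(\chi))/\iota_{K'}(\psi(\chi))$ on $\Hlab[l]$ has \emph{finite image} (Lemma~\ref{lem:finite-image}), via a Chebotarev argument on the compositum of the fields cut out by characters in the image, using that each $\alpha(\chi)$ is trivial on the positive-density set $\sigma(\cP_\sigma)$; then Lemma~\ref{finite-index-has-chip} (a Grunwald--Wang construction) yields, for all but finitely many $\mfp$, a character $\chi_\mfp\in X_\mfp\cap\ker(\alpha)$, and for such a character $\psi(\chi_\mfp)\big(\sigma(\mfP)\cap K'\big)=\chi_\mfp(\mfp)=\zeta_l\neq 1$ for \emph{every} $\mfP\mid\mfp$, which pins $\sigma(\mfP)\cap K'=\phi(\mfp)$ independently of $\mfP$ and lets Lemma~\ref{lem:restriction} fire.

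Your final step has the same underlying gap. Mimicking Lemma~\ref{lemma:prime-bijection-is-unique} is not available: that lemma compares two prime maps compatible with the \emph{same} $\psi$, whereas the prime map induced by $\sigma_\psi$ is a priori compatible only with $\psi_{\sigma_\psi}$, not with $\psi$; establishing that $\psi_{\sigma_\psi}(\chi)=\psi(\chi)$ on a finite-index subgroup of $\Hlab[l]$ is precisely what the finite image of $\alpha$ together with the finite kernel of $\iota_{K'}$ (Remark~\ref{remark:overline-chi}) provides, and it is the reason the conclusion tolerates finitely many exceptional primes rather than none. Note also that agreement of the two prime maps on a positive-density set --- which is all your double-coset argument gives directly --- cannot be bootstrapped character by character, since a nontrivial degree-$l$ cyclic extension already has split density $1/l$; Remark~\ref{remark:counterexample} shows such positive-density agreement genuinely fails to extend in general, so some form of the composite-field Chebotarev argument is unavoidable.
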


\new{We begin by deducing that the desired result follows from this theorem.}

\begin{corollary}
\new{The map $\Theta_5: \IsoPx \to \Isofield$ given by $\psi \mapsto \sigma_\psi$ is a (two-sided) inverse to $\Theta_1 \circ \Theta_4$. }
\end{corollary}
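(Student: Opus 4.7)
The plan is to verify both composite directions by tracking the bijections of primes and then invoking the two uniqueness results already at our disposal, namely Corollary~\ref{cor:psi1-psi2} and Corollary~\ref{corollary:sigma is determined by bijection of primes}. First I would check that $\Theta_5$ is well-defined on $\IsoPx$. Given $\psi \in \IsoPx$ with density-one set $\cS$ and injective norm-preserving map $\phi: \cS \to \cP_{K'}$, the equality $\psi(1_K) = 1_{K'}$ forces $\zeta_K = \zeta_{K'}$, and in particular $[K:\Qz] = [K':\Qz]$. Since primes of inertia degree one over $\Qz$ form a density-one subset of $\cP_K$, intersecting $\cS$ with them gives a density-one subset of inertia-degree-one primes on which the compatibility $\chi(\mfp) = \psi(\chi)(\phi(\mfp))$ still holds. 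Choosing $L = K$ so that $\text{Spl}(L/K) = \cP_K$, the hypothesis of Theorem~\ref{theorem:positive_density_result} is satisfied, and the theorem supplies a unique $\sigma_\psi: K \DistTo K'$ whose induced bijection of primes equals $\phi$ on $\cS$ up to finitely many exceptions.

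For the identity $\Theta_5 \circ (\Theta_1 \circ \Theta_4) = \id_{\Isofield}$, I would start with $\sigma \in \Isofield$ and set $\psi := \psi_\sigma$. By Corollary~\ref{corollary:psi has same bijection of primes as sigma}, the bijection of primes attached to $\psi$ is exactly $\sigma$ itself, defined on all of $\cP_K$. The uniqueness clause of Theorem~\ref{theorem:positive_density_result} characterises $\sigma_\psi$ as the only field isomorphism whose induced bijection of primes agrees with this data outside a finite set; since $\sigma$ trivially has this property, uniqueness forces $\sigma_\psi = \sigma$.

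For the identity $(\Theta_1 \circ \Theta_4) \circ \Theta_5 = \id_{\IsoPx}$, I would take $\psi \in \IsoPx$ with prime data $(\cS, \phi)$ and set $\sigma_\psi := \Theta_5(\psi)$. Then $(\Theta_1 \circ \Theta_4)(\sigma_\psi) = \psi_{\sigma_\psi}$ lies in $\IsoP \subseteq \IsoPx$, and by Corollary~\ref{corollary:psi has same bijection of primes as sigma} its associated bijection of primes is $\sigma_\psi$ itself. By construction this bijection coincides with $\phi$ on $\cS$ outside a finite exceptional set, so $\psi$ and $\psi_{\sigma_\psi}$ are two elements of $\IsoPx$ whose prime bijections agree on a density-one subset. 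Corollary~\ref{cor:psi1-psi2} then yields $\psi_{\sigma_\psi} = \psi$. The only genuine difficulty has already been absorbed into Theorem~\ref{theorem:positive_density_result}; what remains for the corollary is precisely the bookkeeping just described.
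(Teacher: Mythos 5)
Your overall structure mirrors the paper's (feed $\psi\in\IsoPx$ into Theorem~\ref{theorem:positive_density_result} with $L=K$, then identify both composites using Corollary~\ref{corollary:psi has same bijection of primes as sigma}, Corollary~\ref{cor:psi1-psi2} and the uniqueness clause), and the two composite-identity arguments are sound; the paper even gets $\Theta_5\circ\Theta_1\circ\Theta_4=\id$ more cheaply from the injectivity of $\Theta_1\circ\Theta_4$, but your direct use of the uniqueness clause is equally valid.

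However, there is a genuine gap in your verification that $\Theta_5$ is well defined: you justify the hypothesis $[K:\Qz]=[K':\Qz]$ of Theorem~\ref{theorem:positive_density_result} by claiming that ``$\psi(1_K)=1_{K'}$ forces $\zeta_K=\zeta_{K'}$''. That inference is only available for elements of $\IsoL$, where one assumes $L(\chi,s)=L(\psi(\chi),s)$ for all $\chi$; it is exactly the argument the paper uses when constructing $\Theta_3$. An element of $\IsoPx$ carries no $L$-series hypothesis at all: its defining condition $\chi(\mfp)=\psi(\chi)(\phi(\mfp))$ on a density-one set $\cS$ is vacuous for the trivial character (both sides are $1$), and since $\phi$ is only an injection defined on $\cS$, nothing about the Euler products outside $\phi(\cS)$, nor about the primes of $K$ outside $\cS$, is controlled. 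Indeed, establishing any relation between $\IsoPx$ and $\IsoL$ is precisely what the chain $\Theta_5,\Theta_4,\Theta_2$ is meant to accomplish, so invoking $\zeta_K=\zeta_{K'}$ here is unjustified (and, as written, circular in spirit). The paper closes this step differently: since $\cP_K\setminus\cS$ has Dirichlet density zero, one can choose a rational prime $p$, unramified in $K$, all of whose primes lie in $\cS$; because $\phi$ is injective and norm-preserving, the sum of the inertia degrees over $p$ gives $[K':\Qz]\ge[K:\Qz]$, and the reverse inequality follows by applying the same argument to $\phi^{-1}$ on the image $\cS'$, which has density one by Lemma~\ref{lemma:density preserved by norm-preserving bijection}. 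You need this (or an equivalent) argument; the zeta-function shortcut does not exist at this stage.
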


\begin{proof}
\new{Let $\psi \in \IsoPx$. It has an associated injective norm-preserving map of primes $\phi: \cS \to \cP_{K'}$, where $\cS \subseteq \cP_K$ is of density one. Let $\cS'$ be the image of $\phi$. Lemma~\ref{lemma:density preserved by norm-preserving bijection} below guarantees that the density of $\cS'$ is one as well. As $\cS$ has density one, there is a rational prime $p$ unramified in $K/\Qz$ such that all primes lying over $p$ are contained in $\cS$. The degree of $K$ equals the sum of all inertia degrees of the primes lying over $p$, and as $\phi$ is norm-preserving and injective it follows that $[K':\Qz] \geq [K:\Qz]$. By repeating this argument for $\cS'$ and $\phi^{-1}: \cS' \to \cS$, we obtain $[K':\Qz] = [K:\Qz]$.}

\new{The theorem now provides an isomorphism $\sigma_\psi$ because $\phi$ is defined on all but a zero density subset of the primes of inertia degree $1$.}

\new{Let $\psi \in \IsoPx$ and consider $\psi_{\sigma_{\psi}} = (\Theta_1 \circ \Theta_4 \circ \Theta_5)(\psi)$. By Theorem~\ref{theorem:positive_density_result} and Corollary~\ref{corollary:psi has same bijection of primes as sigma} we find that the maps of primes associated to $\psi_{\sigma_{\psi}}$ and $\psi$ agree everywhere except on a density zero set of primes. It follows from Corollary~\ref{cor:psi1-psi2} that $\psi_{\sigma_{\psi}} = \psi$. The injectivity of $\Theta_1 \circ \Theta_4$ guarantees that $\Theta_5 \circ \Theta_1 \circ \Theta_4$ is the identity map as well.}
\end{proof} 

\begin{lemma}\label{lemma:density preserved by norm-preserving bijection}
Let $K$ and $K'$ be number fields. Let $\cS \subseteq \cP_{K}$. Suppose there is an injective norm-preserving map $\phi: \cS \to \cP_{K'}$, and assume that $\cS$ has a Dirichlet density $\delta(\cS)$ in the primes of $K$. Then the density of the image of $\phi$ in $\cP_{K'}$ exists and equals $\delta(\cS)$. 
\end{lemma}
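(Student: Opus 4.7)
The plan is to unpack the definition of Dirichlet density and observe that the hypotheses on $\phi$ make the numerators of the two density computations identical, while the denominators agree asymptotically because the normalizing behaviour at $s \to 1^+$ is the same for every number field.

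Concretely, for a set $T \subseteq \cP_F$ of primes of a number field $F$, I would use the formulation
\[
\delta(T) \;=\; \lim_{s \to 1^+} \frac{\sum_{\mfp \in T} N\mfp^{-s}}{\log \frac{1}{s-1}},
\]
which follows from the standard asymptotic $\sum_{\mfp \in \cP_F} N\mfp^{-s} = \log \frac{1}{s-1} + O(1)$ as $s \to 1^+$. This asymptotic holds for every number field (since $\zeta_F$ has a simple pole at $s=1$, and primes of residue degree $\geq 2$ contribute a series that converges absolutely at $s=1$); crucially, the implied main term $\log \frac{1}{s-1}$ does \emph{not} depend on $F$.

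Next, because $\phi : \cS \to \cP_{K'}$ is injective and preserves norms, reindexing $\mfq = \phi(\mfp)$ gives the tautological identity
\[
\sum_{\mfq \in \phi(\cS)} N\mfq^{-s} \;=\; \sum_{\mfp \in \cS} N\mfp^{-s}
\]
for all $s > 1$. Dividing by $\log \frac{1}{s-1}$ and taking $s \to 1^+$ yields $\delta(\phi(\cS)) = \delta(\cS)$, as required. In particular, the existence of the limit on the $K'$-side is automatic from its existence on the $K$-side.

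There is no real obstacle: the only subtle point is choosing the normalization of Dirichlet density so that $K$ and $K'$ share the same denominator; once that is done the claim reduces to a one-line reindexing. No appeal to Chebotarev or Grunwald--Wang, and no exceptional density-zero set, is needed.
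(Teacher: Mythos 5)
Your proof is correct and matches the paper's argument in all essentials: both exploit the field-independence of the normalization $\log\frac{1}{s-1}$ (justified by the simple pole of $\zeta_F$ at $s=1$) and reindex the Dirichlet sum via the injective norm-preserving map $\phi$. The paper merely phrases it through the intermediate denominator $\sum_{\mfp \in \cP_K} N\mfp^{-s}$ before converting to $\log\frac{1}{s-1}$, but that is a cosmetic difference.
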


\begin{proof}
Denote the image of $\phi$ by $\cS'$. Let $s \in \Cz$ with $\text{Re}(s) > 1$. 
By \cite[Ch.\ 7, \S 2, Cor.\ 1]{narkiewicz}, we have for $s \to 1$ that
\[
\frac{\sum_{\mfp \in \cP_K} N\mfp^{-s}}{\log(1/(s-1))} \to 1.
\]
The same holds for $\sum_{\mfp \in \cP_{K'}} N\mfq^{-s} $. 

As $\cS$ has Dirichlet density $\delta(\cS)$, we have
\[
\lim_{s \to 1} \frac{\sum_{\mfp \in \cS} N\mfp^{-s}}{\sum_{\mfp \in \cP_K} N\mfp^{-s} } = \delta(\cS). 
\] Moreover, as $\phi$ is a norm-preserving bijection $\cS \to \cS'$, 
\[
\sum_{\mfp \in \cS} N\mfp^{-s} = \sum_{\mfp \in \cS} N\phi(\mfp)^{-s} = \sum_{\mfq \in \cS'} N\mfq^{-s}.
\]
It follows that
\[
\lim_{s \to  1} \frac{\sum_{\mfq \in \cS'} N\mfq^{-s}}{\sum_{\mfp \in \cP_{K'}} N\mfq^{-s}} = \lim_{s \to  1} \frac{\sum_{\mfq \in \cS'} N\mfq^{-s}}{\log(1/(s-1))} =  \lim_{s \to 1} \frac{\sum_{\mfp \in \cS} N\mfp^{-s}}{\sum_{\mfp \in \cP_K} N\mfp^{-s} } = \delta(\cS). \qedhere
\]
\end{proof}

\new{The idea of the proof of Theorem~\ref{theorem:positive_density_result} is as follows: let $N$ be a Galois extension of $\Qz$ containing $K'$ and $L$ (hence also $K$). We show that there is a $\sigma \in \Gal(N/\Qz)$ that ``agrees'' with $\phi$ on many primes (this is made more precise in Definition~\ref{def:mirror}), and that this $\sigma$ restricts to an isomorphism $K \DistTo K'$. This approach is motivated by the following lemma, which makes a descent from $N$ to $K$ possible based only on a condition on the primes:}

\begin{lemma}\label{lem:restriction}
\new{Let $N/\Qz$ be a Galois extension, and $K$ and $K'$ subfields of $N$ of the same degree. Let $\sigma \in \Gal(N/\Qz)$, $p$ be any rational prime that is totally split in $N$, and fix a prime $\mfp \mid p$ of $K$. Suppose that $\mfq := \sigma(\mfP) \cap K'$ is independent of the choice of the prime $\mfP \mid \mfp$ of $N$. Then $\sigma$ restricts to an isomorphism $K \DistTo K'$.}
\end{lemma}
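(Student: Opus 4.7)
The plan is to translate the prime-theoretic hypothesis into a statement about conjugation of Galois subgroups, and then apply the Galois correspondence. The key input that makes the translation clean is that $p$ being totally split in $N$ forces every decomposition group (of every prime of $N$ over $p$, in any intermediate Galois group) to be trivial.

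\textbf{First step: parametrize the primes.} Fix a prime $\mfP_0$ of $N$ lying over $\mfp$, and set $\mfQ_0 := \sigma(\mfP_0)$, so by hypothesis $\mfQ_0 \cap K' = \mfq$. Since $p$ is totally split in $N$, the decomposition group of $\mfP_0$ in $\Gal(N/K)$ is trivial, so the primes of $N$ above $\mfp$ are exactly $\{\tau(\mfP_0) : \tau \in \Gal(N/K)\}$ with no repetitions. Likewise, the primes of $N$ above $\mfq$ are exactly $\{\rho(\mfQ_0) : \rho \in \Gal(N/K')\}$, again without repetitions.

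\textbf{Second step: extract the conjugation statement.} By assumption, for every $\tau \in \Gal(N/K)$ the prime $\sigma\tau(\mfP_0)$ lies over $\mfq$, so there exists (uniquely) a $\rho_\tau \in \Gal(N/K')$ with $\sigma\tau(\mfP_0) = \rho_\tau(\mfQ_0) = \rho_\tau\sigma(\mfP_0)$. Applying $\sigma^{-1}\rho_\tau^{-1}\sigma\tau$ to $\mfP_0$ gives $\mfP_0$ back; since the decomposition group of $\mfP_0$ in $\Gal(N/\Qz)$ is also trivial (as $p$ is totally split in $N$), this forces
\[
\sigma\tau\sigma^{-1} = \rho_\tau \in \Gal(N/K').
\]
Hence $\sigma\,\Gal(N/K)\,\sigma^{-1} \subseteq \Gal(N/K')$.

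\textbf{Third step: conclude via Galois correspondence.} The degree hypothesis $[K:\Qz]=[K':\Qz]$ gives $|\Gal(N/K)|=|\Gal(N/K')|$, so the above inclusion is an equality: $\sigma\,\Gal(N/K)\,\sigma^{-1} = \Gal(N/K')$. Since conjugation by $\sigma$ sends the fixed field of $\Gal(N/K)$ to the fixed field of the conjugate subgroup, we obtain $\sigma(K) = K'$, i.e.\ $\sigma$ restricts to an isomorphism $K \DistTo K'$, as required.

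The only delicate point is the passage from the equality $\sigma\tau(\mfP_0) = \rho_\tau\sigma(\mfP_0)$ of primes to the equality $\sigma\tau\sigma^{-1} = \rho_\tau$ of Galois elements; this is exactly where total splittedness of $p$ in $N$ is used, since otherwise two Galois elements could send $\mfP_0$ to the same prime. Everything else is bookkeeping.
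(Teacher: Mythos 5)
Your proof is correct and follows essentially the same route as the paper: both translate the hypothesis into a conjugation statement between $\Gal(N/K)$ and $\Gal(N/K')$ by using that total splitness of $p$ makes decomposition groups trivial (so Galois elements are determined by their action on one prime over $p$) together with transitivity, then conclude by cardinality and the Galois correspondence. The only difference is cosmetic: you conjugate $\Gal(N/K)$ into $\Gal(N/K')$ via a simply-transitive parametrization, while the paper conjugates $\Gal(N/K')$ into $\Gal(N/K)$.
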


\begin{proof}
\new{By assumption $\sigma$ maps any prime of $N$ lying over $\mfp$ to a prime of lying over $\mfq$. Because $p$ is totally split in $N/\Qz$, both $\mfp$ and $\mfq$ split into $[N:K] = [N:K']$ primes in $N$. Therefore $\sigma$ bijectively maps the primes lying over $\mfp$ to the primes lying over $\mfq$.} 

\new{Let $\tau \in \Gal(N/K')$ and consider the action of $\sigma^{-1} \tau \sigma$ on the primes of $N$ lying over $p$. If $\mfP$ is any prime lying over $\mfp$, then $(\tau \sigma)(\mfP)$ lies over $\mfq$, hence by the previous paragraph $(\sigma^{-1}\tau\sigma)(\mfP)$ lies over $\mfp$. Therefore $\sigma^{-1} \tau \sigma$ permutes the primes lying over $\mfp$. As $p$ is totally split in $N$, an element in $\Gal(N/\Qz)$ is uniquely determined by the image of $\mfP$,. Because $\Gal(N/K)$ acts transitively on the primes lying over $\mfp$, it follows that $\sigma^{-1}\tau \sigma \in \Gal(N/K)$. Hence we find
\[
\Gal(N/\sigma^{-1} K') = \sigma^{-1} \Gal(N/K') \sigma \subseteq \Gal(N/K),
\]
and thus $\sigma^{-1} K' = K$ as the sets have equal cardinality.}
\end{proof}

For the remainder of this section, assume that the conditions of Theorem~\ref{theorem:positive_density_result} hold.

\begin{definition}\label{def:mirror}
Let $\sigma \in \Gal(N/\Qz)$ and $\mfP$ a prime of $N$ lying over $\mfp$, a prime of $K$ \new{contained in $\cS$}. We say that $\sigma$ \emph{follows} $\phi$ \emph{at} $\mfP$ if $\sigma(\mfP)$ lies over $\phi(\mfp)$. 
\end{definition}

\begin{lemma}
There exists a $\sigma \in \Gal(N/\Qz)$ such that the set
\[
\cP_\sigma := \{\mfP \in \cP_N: f_{\mfP} = 1\text{, } \mfP \cap K \in \cS \text{ and } \sigma \text{ follows } \phi \text{ at } \mfP\}
\]
has positive density in the primes of $N$.
\end{lemma}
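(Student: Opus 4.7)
The plan is to set up a weighted pigeonhole argument over the finite group $\Gal(N/\Qz)$ using Dirichlet densities. First, let $\cT \subseteq \cP_N$ be the set of primes $\mfP$ of $N$ of inertia degree $1$, unramified in $N/\Qz$, whose contraction $\mfp = \mfP \cap K$ lies in $\cS$. I will show $\delta(\cT) = 1$ in $\cP_N$. Any such $\mfP$ lies above a rational prime $p$ which is totally split in $N$; since $L \subseteq N$ (and $N/\Qz$ is Galois, so the Galois closure of $L$ over $\Qz$ is contained in $N$), $p$ is totally split in $L$, hence every prime of $K$ above $p$ lies in $\text{Spl}(L/K)$. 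By hypothesis, $\text{Spl}(L/K) \setminus \cS$ has Dirichlet density zero in $\cP_K$, so the set of rational primes $p$ admitting a prime of $K$ above them outside $\cS$ has density zero in $\Nz$, and the corresponding primes of $N$ form a density-zero subset of $\cP_N$. Thus $\cT$ differs from the full set of unramified degree-$1$ primes of $N$ by a density-zero set, and the latter has density $1$ in $\cP_N$ by a standard Chebotarev computation.

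Next, I will count, for each $\mfP \in \cT$, the number of $\sigma \in \Gal(N/\Qz)$ that follow $\phi$ at $\mfP$. Writing $\mfp = \mfP \cap K$ and letting $p$ be the rational prime below, $\phi(\mfp)$ is a degree-$1$ prime of $K'$ above $p$, and since $p$ is totally split in $N$, there are exactly $[N:K']$ primes of $N$ above $\phi(\mfp)$. Moreover the decomposition group of $\mfP$ in $\Gal(N/\Qz)$ is trivial (since $p$ is unramified and totally split in $N$), so for each target prime $\mfQ$ above $\phi(\mfp)$ there is a unique $\sigma \in \Gal(N/\Qz)$ with $\sigma(\mfP) = \mfQ$. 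Using the hypothesis $[K:\Qz] = [K':\Qz]$, this gives exactly $[N:K'] = [N:K]$ elements of $\Gal(N/\Qz)$ following $\phi$ at $\mfP$.

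Finally, summing the identity $\sum_{\sigma \in \Gal(N/\Qz)} \mathbf{1}_{\cP_\sigma}(\mfP) = [N:K]$ over $\mfP \in \cT$, weighted by $N\mfP^{-s}$, gives
\[
\sum_{\sigma \in \Gal(N/\Qz)} \sum_{\mfP \in \cP_\sigma \cap \cT} N\mfP^{-s} = [N:K] \sum_{\mfP \in \cT} N\mfP^{-s}.
\]
Dividing by $\log(1/(s-1))$ and letting $s \to 1^+$, the right-hand side tends to $[N:K] \cdot \delta(\cT) = [N:K]$. Since $\Gal(N/\Qz)$ is finite, some $\sigma$ must satisfy
\[
\limsup_{s \to 1^+} \frac{\sum_{\mfP \in \cP_\sigma \cap \cT} N\mfP^{-s}}{\log(1/(s-1))} \geq \frac{[N:K]}{[N:\Qz]} = \frac{1}{[K:\Qz]} > 0,
\]
so $\cP_\sigma$ has positive Dirichlet density in $\cP_N$, as desired. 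The main technical care lies in the first step, where one must ensure that the density-zero exceptional primes in $\cP_K$ really pull back to a density-zero subset of $\cP_N$, and in the third step, where one must phrase the finite pigeonhole so as to extract a single $\sigma$ of positive density rather than a density spread across different $\sigma$'s at different scales.
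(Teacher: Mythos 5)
Your proposal is correct and follows essentially the same route as the paper: both show that the degree-one primes of $N$ whose contraction to $K$ lies in $\cS$ form a density-one set (using that such contractions lie in $\text{Spl}(L/K)$ and that the exceptional set pulls back to density zero), and then pigeonhole over the finite group $\Gal(N/\Qz)$. Your only refinements are the explicit count that each such $\mfP$ lies in exactly $[N:K]$ of the sets $\cP_\sigma$, which sharpens the lower bound from the paper's $1/[N:\Qz]$ to $1/[K:\Qz]$, and the careful $\limsup$ formulation (strictly yielding positive \emph{upper} Dirichlet density, the same implicit caveat as in the paper's own proof).
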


\begin{proof}
\new{Note that for any $\mfP \in \cP_N$ with inertia degree $1$ we have that $\mfP \cap K$ splits completely in $N/K$, hence $\mfP \cap K \in \text{Spl}(L/K)$. As $\cS$ contains $\text{Spl}(L/K)$ except for a set of density zero, the set
\[
\cS_N = \{\mfP \in \cP_N: f_{\mfP} = 1\text{, } \mfP \cap K \in \cS\}
\]
has density one in the primes of $N$.}

As $N/\Qz$ is a Galois extension, $\Gal(N/\Qz)$ acts transitively on the primes lying over any rational prime $p$. In particular, for any prime $\mfP \mid \mfp$ there exists a $\sigma$ such that $\sigma(\mfP) \mid \phi(\mfp)$. Hence
\[
\bigcup_{\sigma \in \Gal(N/\Qz)} \cP_\sigma
\]
is the entire set \new{$\cS_N$} and \new{therefore} a set of density one. Because $\Gal(N/\Qz)$ is finite, there must exist a $\sigma$ for which the set $\cP_\sigma$ has positive density, in fact the density will be at least $1/[N:\Qz]$.
\end{proof}

For the remainder of this section, we fix $\sigma$ to be the element of $\Gal(N/\Qz)$ found in the previous lemma. In order to relate this automorphism of $N$ to the character group $\Hlab[l]$, we \new{make use of} a map $\Hlab[l] \to \Hnab[l]$ that is ``almost injective'', which allows us to study the action of $\sigma$ on the characters of $G_K$.

\begin{definition}\label{def:overline-chi}
Define $\iota_{K, N}: \Hkab \to \Hnab$ as the dual of the map 
\begin{align*}
\Gnab &\to \Gkab\\
\sigma &\mapsto \restr{\sigma}{K^\textup{ab}}.
\end{align*}
For brevity we denote $\iota_K := \iota_{K, N}$ and $\iota_{K'} := \iota_{K', N}$. 
\end{definition}

\begin{remark}\label{remark:overline-chi}
The extension $N_{\iota_K(\chi)}/N$ associated to $\iota_K(\chi)$ is the composite of fields $N$ and $K_\chi$, hence the (finite) kernel of $\iota_K$ consists precisely of the characters $\chi$ such that $K_{\chi} \subseteq N$. \new{If $\chi$ is a character of order $l$}, $N_{\iota_K(\chi)}/N$ is an extension of degree either $1$ or $l$. If $\mfP$ is a prime of $N$ such that $\chi$ is unramified at $\mfp := \mfP \cap K$, then $\iota_K(\chi)(\mfP) = \chi(\mfp)^{[\mathbb{F}_{\mfP}:\mathbb{F}_\mfp]}$. In particular, if $\mfp$ splits completely in $N/K$, then $\iota_K(\chi)(\mfP) = \chi(\mfp)$.
\end{remark}

\begin{lemma}\label{lem:finite-image}
The map
\begin{align*}
\alpha: \Hlab[l] &\to \Hnab[l] \\
\chi &\mapsto \psi_\sigma(\iota_K(\chi))/ \iota_{K'}(\psi(\chi)),
\end{align*}
where $\psi_\sigma = \Theta_4(\sigma)$ is defined as in \textup{(\ref{equation:psi-sigma})}, has finite image. As a result, its kernel has finite index.
\end{lemma}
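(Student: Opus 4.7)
The plan is to show the image of $\alpha$ is finite; since $\alpha$ is a group homomorphism into the abelian group $\Hnab[l]$ (it is the pointwise quotient of two homomorphisms $\psi_\sigma \circ \iota_K$ and $\iota_{K'} \circ \psi$), this will imply that $\ker \alpha$ has finite index.

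The key computation is: for $\chi \in \Hlab[l]$ and $\mfP' \in \cP_\sigma$ whose image $\mfp' := \mfP' \cap K$ is unramified in $\chi$ and whose image $\phi(\mfp')$ is unramified in $\psi(\chi)$ --- which excludes only finitely many $\mfP' \in \cP_\sigma$, as both characters are ramified at finitely many primes and $\phi$ is injective --- I claim that $\alpha(\chi)(\sigma(\mfP')) = 1$. Writing $\mfP = \sigma(\mfP')$, Corollary~\ref{corollary:psi has same bijection of primes as sigma} applied to the automorphism $\sigma$ of $N$ gives $\psi_\sigma(\iota_K(\chi))(\mfP) = \iota_K(\chi)(\mfP')$. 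Since $f_{\mfP'} = f_\mfP = 1$, Remark~\ref{remark:overline-chi} simplifies $\iota_K(\chi)(\mfP')$ to $\chi(\mfp')$ and $\iota_{K'}(\psi(\chi))(\mfP)$ to $\psi(\chi)(\mfP \cap K') = \psi(\chi)(\phi(\mfp'))$; the last equality uses that $\sigma$ follows $\phi$ at $\mfP'$. The hypothesis $\chi(\mfp') = \psi(\chi)(\phi(\mfp'))$ then yields $\alpha(\chi)(\mfP) = 1$.

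Setting $T := \sigma(\cP_\sigma)$, which has positive density equal to $\delta(\cP_\sigma)$ by Lemma~\ref{lemma:density preserved by norm-preserving bijection}, we have shown that for each $\chi \in \Hlab[l]$ the character $\beta := \alpha(\chi)$ is trivial on $T$ except at finitely many primes. Since $\{\mfP : \beta(\mfP) = 1\}$ equals the set of primes of $N$ splitting completely in $N_\beta/N$, the Chebotarev density theorem gives $[N_\beta:N] \leq 1/\delta(T) =: C$.

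The main obstacle is to upgrade this per-character bound into a bound on the compositum $M := \bigvee_{\chi \in \Hlab[l]} N_{\alpha(\chi)}$. For any finite family $\chi_1, \dots, \chi_k$, the intersection of the corresponding ``$T$-minus-finite'' splitting sets still contains $T$ up to a finite (hence density-zero) set, so the same Chebotarev argument applied to the sub-compositum $N_{\alpha(\chi_1)} \cdots N_{\alpha(\chi_k)}$ gives degree at most $C$. Since $M$ is the directed union of these finite sub-compositums of uniformly bounded degree, a maximal sub-compositum already equals $M$, so $[M:N] \leq C$. Thus $\mathrm{im}(\alpha) \subseteq \mathrm{Hom}(\Gal(M/N), \Cz^\times)$ is finite, and $\ker \alpha$ has finite index.
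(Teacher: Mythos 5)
Your proof is correct and takes essentially the same approach as the paper: the same key chain of equalities via Corollary~\ref{corollary:psi has same bijection of primes as sigma} and Remark~\ref{remark:overline-chi} shows $\alpha(\chi)$ kills $\sigma(\cP_\sigma)$ up to finitely many primes, and the same Chebotarev bound on compositums forces finiteness. The only difference is cosmetic — you give a direct bound on the full compositum via directed unions, where the paper argues by contradiction.
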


\begin{proof}
Denote the density of $\cP_\sigma$ by $\delta$. For every prime in $\cP_\sigma$ we have that $\sigma$ follows $\phi$, i.e. if $\mfP \in \cP_\sigma$ and $\mfP \mid \mfp$, then $\sigma(\mfP) \mid \phi(\mfp)$. Let $\chi \in \Hkab[l]$ be any character, and let $\mfP \in \cP_\sigma$ such that $\chi$ is unramified at $\mfp := \mfP \cap K$. 
As $\mfP$ and therefore $\sigma(\mfP)$ have inertia degree $1$, $\mfp$ and $\phi(\mfp)$ split completely in $N/K$. By Remark~\ref{remark:overline-chi} we have
\[
\iota_{K'}(\psi(\chi))(\sigma(\mfP)) = \psi(\chi)(\phi(\mfp)) = \chi(\mfp) = \iota_K(\chi)(\mfP) = \psi_\sigma(\iota_K(\chi))(\sigma(\mfP)).
\]
thus $\alpha(\chi)(\sigma(\mfP)) = 1$, i.e. $\sigma(\mfP)$ splits in the extension associated to the character $\alpha(\chi)$. Note that as $\cP_{\sigma}$ has density $\delta$, so does $\sigma\big(\cP_{\sigma}\big)$, e.g. by Lemma~\ref{lemma:density preserved by norm-preserving bijection}. 

We now argue by contradiction: suppose the image contains infinitely many characters. Choose infinitely many different $\theta_1, \theta_2, \dots \in \Hnab[l]$ in the image of $\alpha$. Denote by $N_{\theta_i}$ the extension associated to $\theta_i$. The composite of all $N_{\theta_i}$ is an infinite extension of $N$, as only finitely many (different) characters can factor through a finite Galois group. Therefore there is an $n \in \Nz$ such that the composite $M = N_{\theta_1} N_{\theta_2} \dots N_{\theta_n}$ (which is Galois over $N$) is of degree larger than $1/\delta$ over $N$. Each of these characters ramifies at finitely many primes, hence for any $1 \leq i \leq n$ the character $\theta_i$ has value $1$ on all but finitely many primes of $\sigma\big(\cP_\sigma\big)$. This implies that there is a density $\delta$ subset of the primes of $N$ such that every prime in this set splits completely in each of the extensions $N_{\theta_i}/N$. Hence all these primes also split completely in $M/N$, and by the Chebotarev density theorem we have $[M:N] \leq 1/\delta$, which is a contradiction.
\end{proof}

The finite index subgroups of $\Hkab[l]$ all contain many characters of a specific form, as made precise by the following definition and lemma.

\begin{definition}\label{definition:X_p}
Let $p$ be a rational prime, $\mfp \in \cP_{K, p}$, and define $X_\mfp \subseteq \Hlab[l]$ as the set of characters $\chi$ of order $l$ with the following properties:
\begin{enumerate}
\item $\chi(\mfp) = \zeta_l \, (= \exp(2\pi i/l))$; and
\item $\chi(\tilde{\mfp}) = 1$ for all $\tilde{\mfp} \in \cP_{K, p}$ unequal to $\mfp$.
\end{enumerate}
By Lemma~\ref{lemma:grunwald-wang in any order}, this set is non-empty.
\end{definition}

\begin{lemma}\label{finite-index-has-chip}
Let $U$ be a finite index subgroup of $\Hlab[l]$. Then for all but finitely many primes $\mfp$ of $K$, there is a character $\chi_\mfp \in X_{\mfp}$ contained in $U$.
\end{lemma}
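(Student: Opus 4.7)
The plan is to recast the statement as a finite linear-algebra problem in the profinite $\Fz_l$-vector space $\Gamma \defeq \Gkab / l \Gkab$, which is Pontryagin-dual to the discrete space $\Hlab[l]$ via $(g, \chi) \mapsto \chi(g)$; after identifying $\mu_l \cong \Fz_l$ (fixing $\zeta_l \leftrightarrow 1$), characters become $\Fz_l$-linear functionals. Under this duality, a finite-index subgroup $U \leq \Hlab[l]$ corresponds to a unique finite subspace $H = U^\perp \subseteq \Gamma$ with $U = \{\chi : \chi|_H = 0\}$, and I will study the \emph{bad} primes $\mfp$, i.e.\ those for which $X_\mfp \cap U = \emptyset$.

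The first step is to extract from Lemma~\ref{lemma:grunwald-wang in any order} the surjectivity of the evaluation map $\Hlab[l] \to \mu_l^{S}$, $\chi \mapsto (\chi(\mfr))_{\mfr \in S}$, for every finite set $S$ of distinct primes of $K$. Dualising, the Frobenii $\{\Frob{\mfr}\}_{\mfr \in \cP_K}$ are $\Fz_l$-linearly independent in $\Gamma$. Consequently each $h \in \Gamma$ admits at most one finite Frobenius expansion $h = \sum_\mfr b^{(h)}_\mfr \Frob{\mfr}$, and I write $S(h)$ for its (finite) support whenever it exists.

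Next, I fix $\mfp$ over a rational prime $p$ and set $F_p \defeq \langle \Frob{\tilde\mfp} : \tilde\mfp \in \cP_{K, p} \rangle \subseteq \Gamma$. I will show that $X_\mfp \cap U \neq \emptyset$ if and only if every $v \in H \cap F_p$ has vanishing $\Frob{\mfp}$-coefficient in its Frobenius expansion. The ``only if'' direction is immediate by evaluating a hypothetical $\chi \in X_\mfp \cap U$ on $v$ in two ways, using $\chi|_H = 0$ on one side and $\chi(\Frob{\mfp}) = 1$, $\chi(\Frob{\tilde\mfp}) = 0$ for $\tilde\mfp \neq \mfp$ on the other. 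For the converse, the listed values define a consistent $\Fz_l$-linear functional on the finite (closed) subspace $H + F_p \subseteq \Gamma$, which extends to a character of $\Gamma$ by Pontryagin duality.

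Combining these ingredients, $\mfp$ is bad precisely when some $h \in H \setminus \{0\}$ has a Frobenius expansion supported in $\cP_{K, p}$ with $b^{(h)}_\mfp \neq 0$. By uniqueness of the expansion, such an $h$ can witness bad primes only at the single rational prime below $S(h)$ and only at $\mfp \in S(h)$, so the bad set is contained in $\bigcup_{h \in H \setminus \{0\}} S(h)$, a finite union of finite sets, which is finite. The main technical hurdle will be the Pontryagin extension in the converse direction; a self-contained alternative inside the paper's toolkit is to pick by Chebotarev auxiliary primes $\mfq_1, \dots, \mfq_r$ whose Frobenii, in a sufficiently large finite abelian quotient of $\Gamma$, give a basis of the image of $H \cap F_p$, and then apply Lemma~\ref{lemma:grunwald-wang in any order} to the enlarged prime set to produce $\chi$ directly.
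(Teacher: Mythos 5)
Your dual set-up is fine as far as it goes: a finite-index $U \leq \Hlab[l]$ does correspond to a finite subgroup $H = U^{\perp}$ of your $\Gamma$ with $U = H^{\perp}$, and Lemma~\ref{lemma:grunwald-wang in any order} does give the linear independence of any fixed choice of Frobenius lifts. The ``only if'' half of your key biconditional is also correct --- but that is the direction you do not need. The direction you do need (the condition on $H \cap F_p$ implies $X_\mfp \cap U \neq \emptyset$) is not established, because membership in $X_\mfp$ is \emph{not} a condition on values at Frobenius lifts alone: by the paper's convention, $\chi(\tilde\mfp) = 1$ means $\chi$ is \emph{unramified} at $\tilde\mfp$ and kills its Frobenius, whereas a ramified character has $\chi(\tilde\mfp) = 0$. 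The character you obtain by extending a functional from $H + F_p$ to all of $\Gamma$ via Pontryagin duality is completely uncontrolled on the inertia subgroups at the primes over $p$, so it may well be ramified there and hence fail to lie in $X_\mfp$. (Relatedly, $\Frob{\tilde\mfp}$ is only well defined modulo inertia, so $F_p$ and your ``Frobenius coefficients'' already depend on choices once ramified characters enter.) The true obstruction to $X_\mfp \cap U \neq \emptyset$ is the restriction of a fixed $\chi_0 \in X_\mfp$ to $H \cap D_p$, where $D_p$ is generated by the full decomposition subgroups (inertia plus Frobenius), not to $H \cap F_p$: for instance, nothing prevents $H$ from being generated by an element $\Frob{\mfp} + \iota$ with $\iota$ inertial at a prime over $p$ and outside $F_p$; then $H \cap F_p = 0$, so your criterion declares $\mfp$ good, yet every $\chi \in X_\mfp$ takes the value $\zeta_l$ on that generator, so $X_\mfp \cap U = \emptyset$.

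This gap also sinks the finiteness step, which is the heart of the lemma: once inertia elements are in play, elements of $H \cap D_p$ need not admit any finite Frobenius expansion, so the containment of the bad set in $\bigcup_{h \in H \setminus \{0\}} S(h)$ has no justification, and it is not clear from your argument that a fixed nonzero $h \in H$ can obstruct only finitely many $p$. Your proposed fallback (auxiliary primes via Chebotarev plus Lemma~\ref{lemma:grunwald-wang in any order}) does not repair this: prescribing $\chi$ at primes whose Frobenii represent a basis of the image of $H \cap F_p$ in some finite quotient only controls $\chi$ on $H$ if $\chi$ factors through that quotient, which the Grunwald--Wang character need not do. For comparison, the paper's proof avoids duality entirely: if there were infinitely many bad primes, lying over pairwise distinct rational primes $p_1, p_2, \dots$, Lemma~\ref{lemma:grunwald-wang in any order} yields $\chi_i \in X_{\mfp_i}$ with value $1$ at all primes over $p_1, \dots, p_i$ other than $\mfp_i$; for $i < j$ one checks $\chi_i \chi_j^{-1} \in X_{\mfp_i}$, so no two $\chi_i$ lie in the same coset of $U$, contradicting finite index. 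To salvage your route you would have to replace $F_p$ by the span of the decomposition subgroups and then supply a genuinely new finiteness argument; as written, the proposal does not prove the lemma.
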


\begin{proof}
We give a proof by contradiction. Enumerate the primes for which the condition does not hold (there are infinitely many by assumption) by $\mfp_1, \mfp_2, \dots$ and define $p_i = \mfp_i \cap \Zz$. As only finitely many primes of $K$ lie over a certain $p_i$, we may assume (by removing some of the $\mfp_i$ if necessary) that all $p_i$ are different. 

 Using Lemma~\ref{lemma:grunwald-wang in any order}, we create for any $i \in \Nz$ a character $\chi_i \in \Hlab[l]$ such that
\begin{itemize}
\item $\chi_i$ is unramified at all primes lying over $p_1, p_2, \dots, p_i$; and
\item $\chi_i$ has value $1$ at all primes of $K$ lying over $p_1, \dots p_i$, except $\mfp_i$, where it has value $\zeta_l$.
\end{itemize}
We have $\chi_i \in X_{\mfp_i}$ and by assumption $\chi_i \notin U$. We show that $\chi_i$ and $\chi_j$ do not lie in the same coset of $\Hlab[l]/U$. Assume without loss of generality that $i < j$. By construction $\chi_j$ has value $1$ at all primes of $K$ lying over $p_i$. Moreover $\chi_i \chi_j^{-1}(\mfp_i) = \zeta_l$, thus we have $\chi_i \chi_j^{-1} \in X_{\mfp}$, hence by assumption $\chi_i\chi_j^{-1} \notin U$. It follows that $\Hlab[l]/U$ has infinitely many cosets; a contradiction.
\end{proof}

\begin{lemma}
The automorphism $\sigma \in \Gal(N/\Qz)$ restricts to an isomorphism $K \DistTo K'$.
\end{lemma}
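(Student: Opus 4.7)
The plan is to reduce the statement to Lemma~\ref{lem:restriction}: it suffices to exhibit a rational prime $p$ totally split in $N$ and a prime $\mfp \mid p$ of $K$ for which $\sigma(\mfP) \cap K'$ does not depend on the choice of $\mfP \mid \mfp$ in $N$. The natural candidate for the common value is $\phi(\mfp)$, and the mechanism for detecting $\phi(\mfp)$ through $\psi$ is a ``bump'' character $\chi_\mfp \in X_\mfp$, which takes the distinguished value $\zeta_l$ only at $\mfp$ among the primes of $K$ over $p$. The key is to choose $\mfp$ so that such a $\chi_\mfp$ also lies in $\ker(\alpha)$; the identity $\psi_\sigma(\iota_K(\chi_\mfp)) = \iota_{K'}(\psi(\chi_\mfp))$ will then transfer information about $\sigma$ into information about $\psi$, which in turn pins down $\sigma(\mfP) \cap K'$.

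First I would arrange the choice of $p$. The Chebotarev density theorem produces infinitely many rational primes totally split in $N$, and any such $p$ is automatically totally split in $L\subseteq N$, so every prime of $K$ above $p$ lies in $\text{Spl}(L/K)$. After removing the density-zero exception set $\text{Spl}(L/K)\setminus\cS$, infinitely many of these $p$ have every prime of $K$ above $p$ inside $\cS$; combined with $[K:\Qz]=[K':\Qz]$, injectivity, and norm-preservation of $\phi$, this makes $\phi$ restrict to a bijection $\cP_{K,p}\to\cP_{K',p}$. Since Lemma~\ref{lem:finite-image} tells us $\ker(\alpha)$ has finite index, Lemma~\ref{finite-index-has-chip} excludes only finitely many primes of $K$, so I can pick such a $p$ together with a prime $\mfp \mid p$ and a character $\chi_\mfp \in X_\mfp \cap \ker(\alpha)$.

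With these choices fixed, for an arbitrary $\mfP \mid \mfp$ of $N$ I would evaluate $\iota_{K'}(\psi(\chi_\mfp))(\sigma(\mfP))$ in two ways. On one hand, $\mfp$ splits completely in $N/K$, so Remark~\ref{remark:overline-chi} gives $\iota_K(\chi_\mfp)(\mfP)=\chi_\mfp(\mfp)=\zeta_l$; Corollary~\ref{corollary:psi has same bijection of primes as sigma}, applied to $\sigma$ viewed as an automorphism of $N$, yields $\psi_\sigma(\iota_K(\chi_\mfp))(\sigma(\mfP))=\zeta_l$, and the relation $\alpha(\chi_\mfp)=1$ converts this into $\iota_{K'}(\psi(\chi_\mfp))(\sigma(\mfP))=\zeta_l$. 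On the other hand, because $p$ is totally split in $N$, the prime $\sigma(\mfP)\cap K'$ splits completely in $N/K'$, so Remark~\ref{remark:overline-chi} again gives $\iota_{K'}(\psi(\chi_\mfp))(\sigma(\mfP))=\psi(\chi_\mfp)(\sigma(\mfP)\cap K')$.

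Combining these, $\psi(\chi_\mfp)(\sigma(\mfP)\cap K')=\zeta_l$. Since every prime of $K'$ above $p$ is of the form $\phi(\mfp_j)$ for some $\mfp_j\mid p$ in $K$, and $\psi(\chi_\mfp)(\phi(\mfp_j))=\chi_\mfp(\mfp_j)$ equals $\zeta_l$ only at $\mfp_j=\mfp$, we conclude $\sigma(\mfP)\cap K' = \phi(\mfp)$ independently of $\mfP$. Lemma~\ref{lem:restriction} then delivers the restriction $\sigma\colon K \DistTo K'$. The main obstacle I anticipate is purely organizational: ensuring that a single prime $p$ really satisfies all the constraints simultaneously (totally split in $N$, every prime above it in $\cS$, and some prime above admitting a bump character in $\ker(\alpha)$). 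This is where the combination of the density input, Lemma~\ref{lem:finite-image}, and Lemma~\ref{finite-index-has-chip} does the heavy lifting; once such a $p$ is pinned down the character-theoretic computation above is short.
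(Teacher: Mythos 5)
Your proposal is correct and follows essentially the same route as the paper: select a rational prime totally split in $N$ whose $K$-primes all lie in $\cS$ and admit a bump character $\chi_\mfp \in X_\mfp \cap \ker(\alpha)$ (via Lemma~\ref{lem:finite-image} and Lemma~\ref{finite-index-has-chip}), then use $\alpha(\chi_\mfp)=1$ together with Remark~\ref{remark:overline-chi} and Corollary~\ref{corollary:psi has same bijection of primes as sigma} to get $\psi(\chi_\mfp)(\sigma(\mfP)\cap K')=\chi_\mfp(\mfp)=\zeta_l$, forcing $\sigma(\mfP)\cap K'=\phi(\mfp)$ and allowing Lemma~\ref{lem:restriction} to conclude. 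The only (harmless) difference is that you fix a single $\mfp\mid p$ rather than requiring bump characters for every prime above $p$, which indeed suffices for Lemma~\ref{lem:restriction}.
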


\begin{proof}
\new{We consider two criteria for rational primes $p$:}
\begin{enumerate}
\item the prime $p$ lies in $\text{Spl}(N/\Qz)$ and every prime of $N$ lying over $p$ is contained in $\cS_N$;
\item for any $\mfp \in \cP_{K, p}$ there exists a character $\chi_\mfp \in X_\mfp$ such that $\chi_{\mfp} \in \ker(\alpha)$.
\end{enumerate}
\new{As $\cS_N$ has density $1$ in the primes of $N$, there exist infinitely many rational primes $p$ that meet the first criterium. The kernel of the map $\alpha$ of Lemma~\ref{lem:finite-image} has finite index, hence we can apply the previous lemma, from which it follows that all but finitely many rational primes $p$ meet the second criterium. Hence there exists a rational prime $p$ that meets both. Fix this prime for the remainder of the proof.}

\new{Because $p$ splits in $N/\Qz$, it also splits in both $K/\Qz$ and $K'/\Qz$. As all the primes of $N$ lying over $p$ are contained in $\cS_N$, all primes of $K$ lying over $p$ are contained in $\cS$. As a result, $\phi$ bijects the primes of $K$ and the primes of $K'$ lying over $p$.}

\new{Let $\mfp$ be any prime of $K$ lying over $p$. By Lemma~\ref{lem:restriction} it suffices to prove that $\sigma(\mfP) \cap K'$ is independent of the choice of $\mfP \mid \mfp$. We prove this by showing that $\sigma(\mfP) \cap K' = \phi(\mfp)$. The character $\chi_{\mfp}$ has value $1$ on all primes lying over $p$ except $\mfp$, and because all primes of $K$ lying over $p$ are contained in $\cS$ it follows that $\psi(\chi_\mfp)$ has value $1$ on all primes of $K'$ lying over $p$ except $\phi(\mfp)$. }
 
Because $\chi_\mfp \in \ker(\alpha)$ we have 
\[
\iota_{K'}(\psi(\chi_{\mfp})) = \psi_\sigma(\iota_K(\chi_\mfp)),
\]
which combined with Remark~\ref{remark:overline-chi} yields the following chain of equalities:
\[
\psi(\chi_\mfp)(\sigma(\mfP) \cap K') = \iota_{K'}(\psi(\chi_\mfp))(\sigma(\mfP)) = \psi_\sigma(\iota_K(\chi_\mfp))(\sigma(\mfP)) = \iota_K(\chi_\mfp)(\mfP) = \chi_\mfp(\mfp).
\]
The value of $\chi_\mfp(\mfp)$ is unequal to $1$ by construction, hence $\psi(\chi_\mfp)(\sigma(\mfP) \cap K) \neq 1$. However, $\psi(\chi_\mfp)$ has value $1$ on all primes lying over $p$ except $\phi(\mfp)$, hence we conclude that $\sigma(\mfP) \cap K = \phi(\mfp)$. 
\end{proof}

\new{We assert that $\restr{\sigma}{K}$ induces the same map of primes $\cS \to \cP_{K'}$ as $\phi$ except for finitely many exceptions. The composite map}
\begin{alignat*}{2}
\Hlab[l] &\to \hspace{18pt}\Hlab[l]\hspace{18pt} &&\to \Hnab[l] \\
\chi &\mapsto \psi_{\sigma \mid_K}(\chi)/\psi(\chi) &&\mapsto \iota_{K'}\big(\psi_{\sigma \mid_K}(\chi)/\psi(\chi)\big) = \psi_{\sigma}(\iota_K(\chi))/\iota_{K'}(\psi(\chi))
\end{alignat*}
\new{is the map $\alpha$ described in Lemma~\ref{lem:finite-image}, hence has finite image. As the second map has finite kernel by Remark~\ref{remark:overline-chi}, the first map must have finite image as well. This implies that the kernel $\{\chi: \psi_{\sigma \mid_K}(\chi) = \psi(\chi)\}$ of the map $\chi \mapsto \psi_{\sigma \mid_K}(\chi)/\psi(\chi)$ is of finite index in $\Hlab[l]$. An application of Lemma~\ref{finite-index-has-chip} guarantees that for all but finitely many primes $\mfp \in \cS$ we have a character $\chi_{\mfp} \in X_{\mfp}$ such that
\[
\psi_{\sigma \mid_K}(\chi_{\mfp}) = \psi(\chi_{\mfp}).
\]
Because $\psi_{\sigma \mid_K}(\chi_{\mfp})(\restr{\sigma}{K}(\tilde{\mfp})) = \chi(\tilde{\mfp})$ for any prime $\tilde{\mfp} \in \cP_K$, the character $\psi_{\sigma \mid_K}(\chi_{\mfp})$ has value $1$ on all primes of $K'$ lying over $\restr{\sigma}{K}(\mfp) \cap \Zz$, except $\restr{\sigma}{K}(\mfp)$, where it has value $\zeta_l$. However, 
\[
\psi_{\sigma \mid_K}(\chi_{\mfp})(\phi(\mfp)) = \psi(\chi_{\mfp})(\phi(\mfp)) = \chi(\mfp) = \zeta_l,
\]
hence $\phi(\mfp) = \restr{\sigma}{K}(\mfp)$. \qed}

To conclude the proof of Theorem~\ref{theorem:positive_density_result} we show that there is only one isomorphism $K \DistTo K'$ with this property. Suppose we have two such isomorphisms, say $\sigma_1$ and $\sigma_2$. They induce the same map of primes on all but a density zero set of $\text{Spl}(L/K)$. In particular there is a prime $p$ that is totally split in $K$ and a prime $\mfp \mid p$ such that $\sigma_1(\mfp) = \sigma_2(\mfp)$. An application of Lemma~\ref{lemma:sigma is determined by its bijection of primes} shows that $\sigma_1 = \sigma_2$.

\begin{remark}\label{remark:counterexample}
\new{There exist isomorphisms $\psi: \Hlab[l] \DistTo \Hlabx[l]$ which have an associated map of primes on a positive density set that cannot be extended to the full set $\cP_K$. For example, let $K = K' = \Qz$, $l = 2$, and let $p$ be a rational prime congruent to $1 \text{ mod } 4$. The characters of order $2$ are in bijection with the elements of $\Qz^\times / \big( \Qz^\times \big)^2$. For $d \in \Qz^\times / \big( \Qz^\times \big)^2$, let $\chi_{\sqrt{d}}$ be the character associated to $\Qz(\sqrt{d})/\Qz$. Define the map}
\begin{align*}
\psi: \widecheck{G}_{\Qz}[2] &\to \widecheck{G}_{\Qz}[2] \\
\chi_{\sqrt{d}} &\mapsto  
\begin{cases}
\chi_{\sqrt{d}} & \text{ if } \left(\frac{d \cdot |d|_p}{p} \right) = 1; \\
\chi_{\sqrt{-d}}& \text{ if } \left(\frac{d \cdot |d|_p}{p} \right) = -1.\\
\end{cases}
\end{align*}
\new{This map is an isomorphism ($\psi \circ \psi = \text{id}_{\widecheck{G}_{\Qz}[2]}$) that abides $\psi(\chi)(q) = \chi(q)$ for all primes $q$ congruent to $1 \text{ mod } 4$. For primes congruent to $3 \text{ mod } 4$ this equality does not hold, hence the prime bijection cannot be extended. A similar construction can be made for characters of higher order.}
\end{remark}

\section*{Acknowledgements}
We would like to thank Gabriele Dalla Torre for sharing his proof of Theorem~\ref{theorem:main theorem} for the case $l = 2$, and Gunther Cornelissen for many helpful discussions and remarks.

\end{document}